\newtheorem{proposicao}{Proposition}[section]
\newtheorem{teorema}{Theorem}[section]
\newtheorem{lema}{Lemma}[section]
\newtheorem{corolario}{Corollary}[section]
\newtheorem{exem}{Example}[section]
\newtheorem{obs}{Remark}[section]
\newenvironment{proof}{{\noindent\bf Proof.} }
                       {\hfill\rule{2.1mm}{2.1mm} \bigskip }
\newtheorem{definicao}{Definition}[section]
\begin{document}
\title{Logarithmic Quasi-distance Proximal Point Scalarization Method for Multi-Objective Programming\thanks{This research was performed at the Computing and Systems Engineering Department, Rio de Janeiro Federal University, where the first author is conducting his doctoral studies, and was partially supported by CAPES.}}
\author{Rog\'erio Azevedo Rocha\thanks{Corresponding author. Tocantins Federal University, Undergraduate Computation Sciences Course, ALC NO 14 (109 Norte), C.P. 114, CEP 77001-090, Tel: +55 63 3232-8027 $\setminus$ +55 63 9221-5966 FAX +55 63 3232-8020,  Palmas, Brazil (azevedo@uft.edu.br).} \and{Paulo Roberto Oliveira\thanks{Rio de Janeiro Federal University, Computing and Systems Engineering Department, Caixa Postal 68511, CEP 21945-970, Rio de Janeiro, Brazil (poliveir@cos.ufrj.br).}} \\
\and{Ronaldo Greg\'orio\thanks{Rio de Janeiro Federal Rural University, Technology and Languages Department, Rua Capito Chaves, N 60, Centro, Nova Iguau, Rio de Janeiro, CEP 26221-010, Brazil (rgregor@ufrrj.br).}}}
\date{\today}
\maketitle
\begin{abstract}
Recently, Greg\'orio and Oliveira developed a proximal point scalarization method (applied to multi-objective optimization problems) for an abstract strict scalar representation with a variant of the logarithmic-quadratic function of Auslender et al. as regularization. In this study, a variation of this method is proposed, using the regularization with logarithm and quasi-distance, which entails losing important properties, such as convexity and differentiability.
However, proceeding differently, it is shown that any sequence $\{(x^k,z^k)\} \subset R^n\times R^m_{++}$ generated by the method satisfies: 
$\{z^k\}$ is convergent and $\{x^k\}$ is bounded and its accumulation points are weak pareto solutions of the unconstrained multi-objective optimization problem \\
\ \\ 
{\bf Keywords}: Proximal point algorithm, \ Scalar representation, 
Multi-Objective programming, \ Quasi-distance.
\end{abstract}
\section{Introduction}
\ \ \ \ This study considers the unconstrained multi-objective optimization problem
\\[-0.8cm]
\begin{eqnarray}
{\rm min} \ \{ F(x):\ \ x\in R^n \} \label{probmultiobjetivo}
\end{eqnarray}
where $F = (F_1,F_2,...,F_m)^T:R^n\rightarrow R^m$ is a convex mapping related to the lexicographic order generated by the cone $R^m_+$, i.e., for all $x,y\in R^n$ and $\lambda\in (0,1)$, 
$F_i(\lambda x + (1 - \lambda) y)\leq \lambda F_i(x) + (1 - \lambda)F_i(y), \ \ \forall i = 1,...,m$. Moreover, it will be required that one of the objective 
functions must be coercive, i.e., there is $r\in \{ 1,...,m\}$ such that $\lim_{\|x\|\rightarrow\infty} F_r(x) = \infty$.

The importance of multi-objective optimization can be seen from the large variety of applications presented in the literature. 
White \cite{White} offers a bibliography of 504 papers describing various different applications addressing, 
for example, problems concerning agriculture, banking, health services, energy, industry and water. More information 
with regard to multi-objective optimization can be found, for example, in section \ref{mp} and Miettinen \cite{Kaisa}.

There is a more general class of problems, known as vector optimization, that contains multi-objective optimization. 
See, for example, Luc \cite{Luc}. On the other hand, the methods developed for this class of problem can be classified into 
two types: scalarization methods and extensions of nonlinear algorithms to vector optimization. Some global 
optimization techniques are discussed in Chinchuluun and Pardalos \cite{Chinchuluun2}.

The classic proximal point method to minimize a scalar convex function $f:R^n\rightarrow R $ generates a sequence
 $\{ x^k \} $ via the iterative scheme: given a starting point $x^0 \in R^n $, then
\begin{equation}
  x^{k+1} \in {\rm argmin} \{ f(x) + \lambda_k \| x - x^k \|^2 : \ \ x\in R^n\},
\label{Alg-original} 
\end{equation}
where $\lambda_k$ is a sequence of real positive numbers and $\|.\|$ is the usual norm. This method was originally 
introduced by Martinet \cite{Martinet} and developed and studied by Rockafellar \cite{Rockafellar 0}. 
In recent decades the convergence analysis of the sequence $\{x^k\}$ has been extensively studied, and several extensions of the method 
have been developed in order to consider cases in which the function $f$ is not convex and/or cases 
where the usual quadratic term in (\ref{Alg-original}) is replaced by a generalized distance, e.g., Bregman distances, 
$\varphi$-divergences, proximal distances and quasi-distances. 
The papers containing these generalizations include: Chen and Teboulle \cite{Chen1}, Iusem and Teboulle \cite{Iusem1}, Pennanen \cite{Pennanen}, 
Hamdi \cite{abdelouahed}, Chen and Pan \cite{Chen2}, Papa Quiroz and Oliveira \cite{Papa}, Moreno et al. \cite{Moreno} and 
Langenberg and Tichatschke \cite{Langenberg}.

This class of proximal point algorithms has been extended to vector optimization. The first method in this direction was the multi-objective 
proximal bundle method (see, Mietttinen \cite{Kaisa}). G\"opfert et al. \cite{Gopfert} have presented a proximal point method for the scalar representation 
$\langle F(x), z\rangle$ with a regularization based on Bregman functions on finite dimensional spaces. Bonnel et al. \cite{Iusem} and Ceng and Yao \cite{Ceng} 
present a proximal algorithm with a quadratic regularization in vector form. Villacorta and Oliveira \cite{Villacorta} also present a proximal 
algorithm in vector form with the regularization being a proximal distance. Greg\' orio and Oliveira \cite{Gregorio} present a proximal 
algorithm in multi-objective optimization for an abstract strict scalar representation with a variant of the logarithmic-quadratic 
functions of Auslender et al. \cite{Auslender} as regularization. 

We will present a brief description of the method of Greg\' orio and Oliveira \cite{Gregorio}. 
Let $F: R^n\rightarrow R^m$ be a convex application. Given the starting points $x^0\in R^n $ and $z^0\in R^m_{++}$ and 
sequences $\beta_k,\mu_k  > 0,  k = 0,1,...$, the method generates a sequence $\{(x^k,z^k)\}\subset R^n \times R^m_{++}$ via the iterative scheme:
{\small 
\begin{equation}
(x^{k+1}, z^{k+1}) \in {\rm argmin} \ \{ f(x,z) + \beta_k H(z) + \frac{\alpha_k}{2} \| x - x^k \|^2 :  \ x\in \Omega^k, z\in R^n_{++}\} 
\label{metodoGregorioePaulo}
\end{equation}}where {\small $\Omega^k = \{ x \in R^n :  F_i(x) \leq F_i(x^k)$\}}, $f: R^n \times R^m_+ \rightarrow R$ satisfies the properties (P1) to (P4) 
(see section \ref{metodo}) and $H: R^m_{++} \rightarrow R$ is such that $H(z) = \langle z/z^k - {\rm log}( z/z^k) - e, e\rangle$
where  $e = (1,...,1)\in R^m$, $z/z^k = (z_1/z^k_1,...,z_m/z^k_m)$ and ${\rm log}(z/z^k) = ({\rm log} (z_1/z^k_1),...,{\rm log} (z_m/z^k_m))$. 

We are proposing a generalization of this method considering (\ref{metodoGregorioePaulo}) with the
quasi-distance $q: R^n \times R^n \rightarrow R_+$ (see definition \ref{defQD}) in place of the 
Euclidean norm $\| . \|$, i.e.,
{\small 
\begin{equation}
(x^{k+1}, z^{k+1}) \in {\rm argmin} \ \{ f(x,z) + \beta_k H(z) + \frac{\alpha_k}{2} q^2(x, x^k):  \ x\in \Omega^k, z\in R^n_{++}\}. 
\label{metodoR,O&G}
\end{equation}}As quasi-distances are not necessarily symmetric (see definition \ref{defQD}), they generalize the distances. 
Therefore, our algorithm generalizes Greg\' orio and
Oliveira's algorithm \cite{Gregorio}. A quasi-distance is not necessarily a convex function,
nor continuously differentiable, nor even a coercive function in any of its
arguments. Supposing that the quasi-distance satisfies the condition (\ref{prop-q}) (see
section \ref{qd}), then the coercivity and Lipschitz properties are recovered (see
Propositions \ref{prop-qd 1} and \ref{prop-qd 2}). However, important properties such as the
convexity and differentiability will be lost. Accordingly, we had to proceed differently to guarantee
the convergence of our method. What is more, we found a new example of a function $f: R^n \times R^m_+ \rightarrow R$ which 
satisfies the properties (P1) to (P4) (see proposition \ref{exemplo p5}), which were fundamental to the convergence of our method. 
Greg\' orio and Oliveira \cite{Gregorio}, drawing on the work of Fliege and Svaiter \cite{Fliege}, supposed that the set $\Omega^0$ is
limited and established the convergence of their method. In our case, a condition of coercivity was imposed on
us in only one of the objective functions, i.e., suppose
that there is  $r\in \{ 1,...,m\}$ such that $\lim_{\|x\|\rightarrow\infty} F_r(x) = \infty$, and that it has as a consequence the limitation of $\Omega^0$ (see Lemma \ref{Compacto}). The
importance of the limitation of the set $\Omega^0$ is that it guarantees that the sequence $\{x^k\}$ generated by our algorithm is limited (see proposition proof \ref{prop sequencias} (i)).

Quasi-distances can be applied not only to computer theory (see, for example,
Brattka \cite{Brattka} and Kunzi et al. \cite{Kunzi}), but also to economy, for
example and, more directly, to consumer choice and to utility functions (see,
for example, Romanguera and Sanchis \cite{Romaguera} and Moreno et al. \cite{Moreno}). Note
that Moreno et al. \cite{Moreno} developed a proximal algorithm with quasi-distance
regularization applied to non-convex and non-differentiable scalar functions, satisfying
the Kurdyka-Lojasiewics inequality. And because the quasi-distance is not necessarily
symmetric, they derived an economic interpretation of this algorithm, applied to 
habit formation. In this respect, the work of Moreno et al. encourages us, in future
investigations, to seek an economic interpretation of our algorithm applied to economy-related multi-objective
problems.

One important point is that our proximal algorithm and the proximal algorithm
developed by Greg\' orio and Oliveira \cite{Gregorio} were developed in multiobjective optimization and belong to the class of 
proximal point scalarization methods. Meanwhile, the algorithms developed by Bonnel et al. \cite{Iusem}, Ceng and Yao \cite{Ceng}, and 
Villacorta and Oliveira \cite{Villacorta} were developed in vector optimization and belong to the class of proximal methods in vector form. This means that the subproblems of our algorithm and Greg\' orio and Oliveira's are problems 
relating to the minimization of scalar functions, while the subproblems of the other studies' algorithms are problems relating to the minimization of vector functions.

Section \ref{qdst} presents concepts and results relating to quasi-distance and subdifferential theory.
In section \ref{mp}, concepts and results of general multi-objective optimization theory are presented. Section \ref{metodo} presents the authors' own method, where we assure the existence of the iterations, stop criterion and convergence. In section \ref{caso-q}, a variation of that method is considered. 
Finally, in section \ref{matlab}, the method is tested and numerical examples are offered.

\section{Quasi-distance and Subdifferential Theory}
\label{qdst}
In this section, the quasi-distance application is defined, with examples, and some of its properties that are fundamental to the course of our work are presented. The concepts of Fr\'echet subdifferential and limiting subdifferential are also revisited, along with some of their properties.
\subsection{Quasi-Distance}
\label{qd}
\begin{definicao}
[\cite{Stojmirovic}] Let $X$ be a set. A mapping $q: X \times X \rightarrow R_+$ is called a quasi-distance if for all $ x,y,z \in X,$ 
\begin{enumerate}
\item $q(x,y) = q(y,x) = 0 \Longleftrightarrow x = y$  
\item $q(x,z) \leq q(x,y) + q(y,z). \ \ \ \ \ \ \  $ 
\end{enumerate}
\label{defQD}
\end{definicao}
Notice that if $q$ satisfies the property of symmetry, i.e., if for all $x,y \in X, q(x,y)= q(y,x)$, then $q$ is a distance. A quasi-distance is not necessarily a convex function and coercive in the first argument, nor in the second (see \cite{Moreno}, Example 3.1 and Remark 3). Moreno et al. \cite{Moreno} presented the following example of quasi-distance.

\begin{exem}
 For each $i = 1,...,n,$, consider $c^-_i,c^+_i\ > 0$ and $q_i:R \times R \rightarrow R_+$ defined by
\begin{center}
$ q_i(x_i,y_i) =  \left\{
\begin{matrix}
c^+_i(y_i-x_i) & if &  y_i - x_i > 0 \\  c^-_i(x_i-y_i) & if &  y_i - x_i \leq 0 
\end{matrix}
\right.
$
\end{center}
is a quasi-distance on $R$, therefore $q(x,y) = \sum\limits_{i=1}^n q_i(x_i,y_i)$ is a quasi-distance on $R^n$. On the other hand, for each $\bar{z}\in R^n$,
\begin{center}
$q(x,\bar{z})= \sum\limits_{i=1}^n q_i(x_i,\bar{z}_i)= \sum\limits_{i=1}^n max \{c_i^+(\bar{z}_i - x_i),c_i^-(x_i - \bar{z}_i)\}, \ \ \ \ \ x\in R^n,$ 
\end{center}
thus $q(.,\bar{z})$ is a convex function. By the same reasoning, $q(\bar{z},.)$ is convex.
\label{exemplo economico}
\end{exem}
Moreno et al. \cite{Moreno} took into account the following condition with regard to the quasi-distance $q$: There are positive constants $\alpha$ and $\beta$
such that
\begin{eqnarray}
\alpha \| x - y \| \leq q(x,y) \leq \beta \| x - y \|, \ \ \ \ \forall x,y \in R^n  \label{prop-q}
\end{eqnarray}
Notice that the quasi-distance in the example \ref{exemplo economico} exhibits the property (\ref{prop-q}). With this condition, Moreno et al. \cite{Moreno} showed that, in each one of the arguments, the quasi-distance exhibits important properties, such as Lipschitz and coercivity. The results follow below:

\begin{proposicao}[\cite{Moreno}, Propositions 3.6 and 3.7]
Let $q: R^n \times R^n \rightarrow R_+$ be a quasi-distance that exhibits (\ref{prop-q}). Then for each $\bar{z} \in R^n$ the functions
$q(\bar{z},.)$ and $q(.,\bar{z})$ are Lipschitz continuous and the functions $q^2(\bar{z},.)$ and $q^2(.,\bar{z})$ are locally Lipschitz continuous functions on $R^n$.
\label{prop-qd 1}
\end{proposicao}
\begin{proposicao}[\cite{Moreno}, Remark 5]
Let $q: R^n \times R^n \rightarrow R_+$ be a quasi-distance that exhibits (\ref{prop-q}). Then for each $\bar{z} \in R^n$ the functions
$q(\bar{z},.)$, $q(.,\bar{z})$, $q^2(\bar{z},.)$ and $q^2(.,\bar{z})$ are coercive. 
\label{prop-qd 2} 
\end{proposicao}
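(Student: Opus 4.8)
The plan is to exploit only the left-hand (lower) bound in the hypothesis (\ref{prop-q}), since coercivity is purely a statement about growth at infinity and therefore needs nothing more than a lower envelope that blows up. Recall that a function $g:R^n\rightarrow R$ is coercive when $\lim_{\|x\|\rightarrow\infty}g(x)=\infty$, so in every case it suffices to bound the function below by a quantity that diverges.

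First I would treat $q(\bar{z},\cdot)$. Specializing (\ref{prop-q}) with the first argument fixed at $\bar{z}$ gives $q(\bar{z},x)\geq\alpha\|\bar{z}-x\|$ for every $x\in R^n$. Combining this with the reverse triangle inequality $\|\bar{z}-x\|\geq\|x\|-\|\bar{z}\|$ yields $q(\bar{z},x)\geq\alpha(\|x\|-\|\bar{z}\|)$, and since $\bar{z}$ is fixed the right-hand side tends to $+\infty$ as $\|x\|\rightarrow\infty$; hence $q(\bar{z},\cdot)$ is coercive. The function $q(\cdot,\bar{z})$ is handled identically, now specializing (\ref{prop-q}) with the second argument fixed at $\bar{z}$ to obtain $q(x,\bar{z})\geq\alpha\|x-\bar{z}\|$ and invoking the same reverse triangle estimate.

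For the two squared functions I would simply square the lower bounds already established. Since $q$ is non-negative, $q(\bar{z},x)\geq\alpha\|\bar{z}-x\|\geq 0$ forces $q^2(\bar{z},x)\geq\alpha^2\|\bar{z}-x\|^2$, whose right-hand side again diverges as $\|x\|\rightarrow\infty$; the same step applied to $q(x,\bar{z})$ settles $q^2(\cdot,\bar{z})$.

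I do not expect any serious obstacle here: the entire argument rests on the single lower estimate in (\ref{prop-q}), while the upper estimate (the part genuinely used for the Lipschitz claims of Proposition \ref{prop-qd 1}) plays no role whatsoever. The only point worth stating with care is that the possible asymmetry of $q$ costs nothing, because the two directions $q(\bar{z},\cdot)$ and $q(\cdot,\bar{z})$ are separately bounded below by $\alpha\|\bar{z}-x\|$ and $\alpha\|x-\bar{z}\|$, and these two Euclidean quantities coincide, so the coercivity conclusion is symmetric even though $q$ need not be.
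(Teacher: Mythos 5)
Your proof is correct, and it is essentially the argument behind the cited result: the paper itself offers no proof of this proposition (it is imported from Moreno et al.\ \cite{Moreno}, Remark 5), and the standard justification there is exactly yours --- the lower estimate $\alpha\|x-y\|\leq q(x,y)$ in (\ref{prop-q}) together with the coercivity of the Euclidean norm (via the reverse triangle inequality), with the squared cases following by squaring the nonnegative lower bound. Nothing is missing; in particular you are right that the upper bound in (\ref{prop-q}) and the possible asymmetry of $q$ play no role here.
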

\subsection{Subdifferential Theory}
Here Frechet's concepts of subdifferential and limiting subdifferential are recalled. Only the results fundamental to our study are presented. For more details, see \cite{Rockafellar}.
\begin{definicao}
Let $h: R^n \rightarrow R \cup \{ \infty \}$ be a proper lower semi-continuous function and $x \in R^n$. 
\begin{enumerate}
\item The Fr\'echet subdifferential of $h$ at $x,\hat{\partial} h(x)$, is defined as follows
\begin{center}
{\footnotesize $ \hat{\partial} h(x) := \left\{
\begin{matrix}
\left\{ x^* \in R^n : \liminf \limits_{y\neq x, y \rightarrow x} \dfrac{h(y)-h(x)- \langle x^*,y - x\rangle}{\lVert x - y\rVert} \geq 0 \right \} , & {\rm if} \  x \in dom(h) \\  
\O{}, \ \ \ \ \ \ \ \ \ \ \ \ \ \ \ \ \ \ \ \ \ \ \ \ \ \ \ \ \ \ \ \ \ \ \ \ \ \ \ \ \ \ \ \ \ \ \ \ \ \ \ \ \ \ \ \ \ \  & {\rm if} \  x \notin dom (h)
\end{matrix}
\right.$ }
\end{center}
\item  The limiting-subdifferential of $h$ at $x\in R^n, \partial h(x)$, is defined as follows
\begin{center}
$\partial h(x) := \left\{ x^* \in R^n : \exists x_n \rightarrow x, \ \ h(x_n) \rightarrow h(x), \ \ x^*_n \in \hat{\partial} h(x_n) \rightarrow x^* \right \} $  
\end{center}
\end{enumerate}
\end{definicao}
\begin{proposicao}[\textbf{Optimality condition} - \cite{Rockafellar}, Theorem 10.1] \ \\
If a proper function $h: R^n \rightarrow R \cup \{ + \infty \} $ has a local minimum at $\bar{x}$, then $0 \in \hat{\partial} h(\bar{x}), 0 \in \partial h(\bar{x}). $
\label{Prop-Cond.Otim.}
\end{proposicao}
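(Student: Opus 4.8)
The plan is to prove the two membership claims in sequence, first for the Fréchet subdifferential and then bootstrapping to the limiting subdifferential directly through its definition. Since $\bar{x}$ is a local minimum of the proper function $h$, I would begin by noting that $h(\bar{x})$ is finite, so that $\bar{x}\in dom(h)$ and the nontrivial branch of the definition of $\hat{\partial} h(\bar{x})$ is the one that applies.

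To establish $0\in\hat{\partial} h(\bar{x})$, I would substitute the candidate $x^*=0$ into the Fréchet difference quotient and verify the required $\liminf$ inequality. Local minimality supplies a neighborhood $U$ of $\bar{x}$ on which $h(y)\geq h(\bar{x})$; hence for every $y\in U$ with $y\neq\bar{x}$ the numerator $h(y)-h(\bar{x})-\langle 0, y-\bar{x}\rangle = h(y)-h(\bar{x})$ is nonnegative, while the denominator $\lVert \bar{x}-y\rVert$ is strictly positive. Each quotient appearing in the limit is therefore $\geq 0$, so its $\liminf$ as $y\to\bar{x}$ is also $\geq 0$, which is exactly the defining condition for $0\in\hat{\partial} h(\bar{x})$.

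For the second inclusion I would use the general fact that $\hat{\partial} h(\bar{x})\subseteq\partial h(\bar{x})$, which is immediate from the definition of the limiting subdifferential by means of constant sequences. Taking $x_n:=\bar{x}$ and $x^*_n:=0$ for all $n$, one has $x_n\to\bar{x}$, $h(x_n)=h(\bar{x})\to h(\bar{x})$, and $x^*_n=0\in\hat{\partial} h(x_n)=\hat{\partial} h(\bar{x})$ with $x^*_n\to 0$. By the definition of $\partial h(\bar{x})$ this yields $0\in\partial h(\bar{x})$, completing the argument.

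There is no serious obstacle here: the whole proof amounts to unwinding the two definitions, with local minimality feeding the sign of the numerator in the Fréchet quotient. The only point that calls for a moment of care is the implicit assumption $\bar{x}\in dom(h)$, needed so that the nontrivial branch of $\hat{\partial} h$ is active; this is guaranteed by reading \emph{local minimum} as occurring at a point of finite value, which is consistent with $h$ being proper.
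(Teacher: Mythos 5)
Your proof is correct. Note, however, that the paper gives no proof of this proposition at all: it is imported verbatim from \cite{Rockafellar} (Theorem 10.1) as a background tool, so there is no internal argument to compare against. Your two-step verification is exactly the standard one (and essentially the proof in Rockafellar--Wets): local minimality makes the numerator $h(y)-h(\bar{x})-\langle 0,y-\bar{x}\rangle=h(y)-h(\bar{x})$ nonnegative on a punctured neighborhood, so the Fr\'echet $\liminf$ condition holds trivially at $x^*=0$, giving $0\in\hat{\partial}h(\bar{x})$; then the constant sequences $x_n=\bar{x}$, $x_n^*=0$ satisfy all the requirements in the definition of the limiting subdifferential, which is just the general inclusion $\hat{\partial}h(\bar{x})\subseteq\partial h(\bar{x})$ specialized to $0$. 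Your side remark about needing $\bar{x}\in dom(h)$ is also the right reading: properness alone would permit a degenerate ``local minimum'' at a point where $h\equiv+\infty$ on a neighborhood, in which case $\hat{\partial}h(\bar{x})=\emptyset$ and the statement would fail, so the convention that a local minimum occurs at a point of finite value is indeed what makes the nontrivial branch of the definition active. With that reading, the argument is complete and gap-free.
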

\begin{obs}
 Let $C \subset R^n$. If a proper function $h: C \rightarrow R\cup \{\infty \}$ has a local minimum at $\bar{x}\in C$, then
$0 \in \hat{\partial}( h + \delta_C) (\bar{x}), 0 \in \partial( h + \delta_C)(\bar{x})$, where $\delta_C$ is the indicator function of the set $C$, defined as $\delta_C(x) = 0$ if $x$ belongs to $C$ and $\delta_C (x) = \infty$ otherwise.  
\label{indicadora}
\end{obs}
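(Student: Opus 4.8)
The plan is to reduce this constrained optimality statement to the unconstrained optimality condition of Proposition \ref{Prop-Cond.Otim.} by absorbing the constraint set $C$ into the objective through its indicator function. I would set $g := h + \delta_C : R^n \rightarrow R \cup \{\infty\}$. The first step is to check that $g$ is a proper function on $R^n$: since $h$ has a local minimum at $\bar{x} \in C$ and is proper, we have $h(\bar{x}) < \infty$, and therefore $g(\bar{x}) = h(\bar{x}) + \delta_C(\bar{x}) = h(\bar{x}) \in R$, so $dom(g)$ is nonempty.

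The key step is to show that $\bar{x}$ is a local minimum of $g$ over the whole space $R^n$, and not merely over $C$. By hypothesis there is a neighborhood $U$ of $\bar{x}$ such that $h(\bar{x}) \leq h(x)$ for every $x \in U \cap C$. For an arbitrary $x \in U$ I would split into two cases. If $x \in C$, then $g(x) = h(x) \geq h(\bar{x}) = g(\bar{x})$ by the local minimality of $h$ on $C$. If $x \notin C$, then $g(x) = h(x) + \infty = \infty \geq g(\bar{x})$, the last inequality being valid precisely because $g(\bar{x}) < \infty$. In both cases $g(x) \geq g(\bar{x})$, so $\bar{x}$ is a local minimum of $g$ on $R^n$.

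Finally, applying Proposition \ref{Prop-Cond.Otim.} to the proper function $g$ yields $0 \in \hat{\partial} g(\bar{x})$ and $0 \in \partial g(\bar{x})$, which is exactly $0 \in \hat{\partial}(h + \delta_C)(\bar{x})$ and $0 \in \partial(h + \delta_C)(\bar{x})$, as claimed. I do not expect any serious obstacle in this argument, since it is essentially a direct reduction; the only point requiring care is ensuring that $g(\bar{x})$ is finite, so that the inequality $g(x) \geq g(\bar{x})$ is meaningful on the complement of $C$, and this is guaranteed by the properness of $h$ together with $\bar{x}$ being a minimizer.
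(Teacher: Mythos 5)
Your proposal is correct and follows exactly the argument the paper intends for this remark: absorb the constraint into the objective via the indicator function $\delta_C$, observe that a local minimizer of $h$ over $C$ is an unconstrained local minimizer of the proper function $h+\delta_C$, and apply Proposition \ref{Prop-Cond.Otim.}. Your care in verifying that $g(\bar{x})$ is finite (so the comparison with points outside $C$ is meaningful) is a sound, if routine, addition.
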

\begin{proposicao}[\cite{Rockafellar}, Exercise  10.10]
If $f_1$ is locally Lispschitz continuous at $\bar{x}$, $f_2$ is lower semi-continuous and proper with $f_2(\bar{x})$ finite, then
\begin{center}
 $\partial (f_1 + f_2)(\bar{x}) \subset \partial f_1 (\bar{x}) + \partial f_2 (\bar{x}) .$
\end{center}
\label{Prop-Sub.1}  
\end{proposicao}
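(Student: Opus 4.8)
The plan is to start from an arbitrary limiting subgradient $v \in \partial(f_1+f_2)(\bar{x})$ and reconstruct it as a sum $a+b$ with $a \in \partial f_1(\bar{x})$ and $b \in \partial f_2(\bar{x})$, exploiting the definition of the limiting subdifferential as a sequential outer limit of Fr\'echet subdifferentials. By that definition, there are sequences $x_k \to \bar{x}$ with $(f_1+f_2)(x_k) \to (f_1+f_2)(\bar{x})$ and $v_k \in \hat{\partial}(f_1+f_2)(x_k)$ with $v_k \to v$. Since $f_1$ is locally Lipschitz, hence continuous, at $\bar{x}$, one has $f_1(x_k)\to f_1(\bar{x})$, and therefore $f_2(x_k) = (f_1+f_2)(x_k) - f_1(x_k) \to f_2(\bar{x})$; this separation of the convergence of function values is the first point where the Lipschitz hypothesis on $f_1$ is essential.

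The central tool I would invoke is the approximate (fuzzy) sum rule for Fr\'echet subdifferentials, valid in $R^n$: for each $k$ and any tolerance $\varepsilon_k \downarrow 0$, there exist points $y_k, z_k$ with $\|y_k - x_k\|, \|z_k - x_k\| \le \varepsilon_k$, $|f_1(y_k)-f_1(x_k)|\le \varepsilon_k$, $|f_2(z_k)-f_2(x_k)|\le \varepsilon_k$, together with $a_k \in \hat{\partial} f_1(y_k)$ and $b_k \in \hat{\partial} f_2(z_k)$ satisfying $\|a_k + b_k - v_k\| \le \varepsilon_k$. The Lipschitz hypothesis now enters decisively a second time: since $f_1$ is Lipschitz with some modulus $L$ on a neighborhood of $\bar{x}$ and $y_k \to \bar{x}$, every Fr\'echet subgradient of $f_1$ at $y_k$ has norm at most $L$ for large $k$, so $\{a_k\}$ is bounded. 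Passing to a subsequence, $a_k \to a$ with $\|a\|\le L$, and consequently $b_k = v_k - a_k + (a_k + b_k - v_k) \to v - a =: b$.

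It then remains to pass to the limit in the two chains separately. Along the chosen subsequence, $y_k \to \bar{x}$, $f_1(y_k)\to f_1(\bar{x})$, $a_k \in \hat{\partial} f_1(y_k)$ and $a_k \to a$, so the definition of the limiting subdifferential gives $a \in \partial f_1(\bar{x})$; identically, $z_k \to \bar{x}$, $f_2(z_k)\to f_2(\bar{x})$, $b_k \in \hat{\partial} f_2(z_k)$ and $b_k \to b$ yield $b \in \partial f_2(\bar{x})$. Hence $v = a + b \in \partial f_1(\bar{x}) + \partial f_2(\bar{x})$, which is the desired inclusion.

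The hard part is the approximate sum rule itself, which is far from elementary: it rests on a variational argument (the Ekeland variational principle, or a smooth variational principle in finite dimensions) applied to a penalized function that decouples $f_1$ and $f_2$, and it is precisely here that lower semicontinuity and properness of $f_2$ with $f_2(\bar{x})$ finite keep the construction well posed. An alternative route, closer to the cited source, is to note that local Lipschitz continuity of $f_1$ forces its horizon (singular) subdifferential to reduce to $\{0\}$, so the basic constraint qualification for the general nonsmooth sum rule is automatically satisfied and the inclusion follows from that rule. Either way, the two indispensable consequences of the Lipschitz assumption are the continuity of $f_1$ (to transfer value convergence onto $f_2$) and the uniform bound on its Fr\'echet subgradients (to extract a convergent subsequence of the $a_k$).
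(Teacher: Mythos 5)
The paper never proves this proposition: it is imported verbatim from Rockafellar and Wets (Exercise 10.10) and used as a black box, so there is no internal proof to compare against, and your attempt has to be judged against the standard arguments in the literature. On that basis it is correct, and it follows one of the two canonical routes. The skeleton --- unwind the definition of $\partial(f_1+f_2)(\bar{x})$ into sequences $x_k\to\bar{x}$, $v_k\in\hat{\partial}(f_1+f_2)(x_k)$, $v_k\to v$; use continuity of the locally Lipschitz $f_1$ to transfer value convergence onto $f_2$; decouple $v_k\approx a_k+b_k$ via the fuzzy sum rule; bound $\|a_k\|$ by the Lipschitz modulus so a subsequence converges, forcing $b_k\to v-a$; and close with the sequential definition of the limiting subdifferential --- is sound, and you correctly identify the two places where the Lipschitz hypothesis is indispensable. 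A minor point worth making explicit: $f_2(x_k)$ is finite because $\hat{\partial}(f_1+f_2)(x_k)\neq\emptyset$ forces $x_k\in \mathrm{dom}(f_1+f_2)$, so the fuzzy sum rule is indeed applicable at $x_k$ for large $k$. The one real debt is the fuzzy sum rule itself, which you invoke without proof; since it is a standard theorem in $R^n$ (indeed in Asplund spaces) when all but one summand is locally Lipschitz, this is a citation rather than a gap --- and no worse than what the paper does with the whole proposition. Your alternative route is in fact the solution the cited source intends for the exercise: local Lipschitz continuity of $f_1$ forces its horizon subdifferential to satisfy $\partial^{\infty}f_1(\bar{x})=\{0\}$ (this equivalence is part of the full statement of Theorem 9.13 of Rockafellar and Wets, the same theorem the paper quotes for local boundedness of the subgradient mappings), so the qualification condition $\partial^{\infty}f_1(\bar{x})\cap\left(-\partial^{\infty}f_2(\bar{x})\right)=\{0\}$ of the general nonsmooth sum rule holds automatically and the inclusion drops out at once. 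That route is shorter if one grants the general calculus of the cited book; yours is more self-contained in spirit, at the price of hiding the variational work inside the fuzzy sum rule.
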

\begin{proposicao}[\cite{Mordukhovich}, Theorem 7.1]
Let $f_i:R^n\rightarrow R, i = 1,2,$ be Lipschitz continuous around $\bar{x}$. If $f_i \geq 0, i = 1,2,$ then one has a product rule of the equality form 
\begin{center}
$\partial (f_1.f_2)(\bar{x}) = \partial (f_2 (\bar{x})f_1 + f_1 (\bar{x})f_2)(\bar{x}).$
\end{center}
\label{Prop.Sub.2}
\end{proposicao}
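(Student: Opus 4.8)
The plan is to reduce this equality to the elementary fact that adding a function which is \emph{strictly differentiable} at $\bar{x}$ (with a prescribed gradient) shifts the limiting subdifferential by exactly that gradient. Writing $g := f_1 f_2$ for the product and $h := f_2(\bar{x})f_1 + f_1(\bar{x})f_2$ for the linearized combination, a direct expansion gives
\[
g(x) - h(x) = \bigl(f_1(x) - f_1(\bar{x})\bigr)\bigl(f_2(x) - f_2(\bar{x})\bigr) - f_1(\bar{x})f_2(\bar{x}),
\]
so that $g = h + r - c$, where $r(x) := \bigl(f_1(x) - f_1(\bar{x})\bigr)\bigl(f_2(x) - f_2(\bar{x})\bigr)$ vanishes at $\bar{x}$ and $c := f_1(\bar{x})f_2(\bar{x})$ is a constant. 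Since a constant summand is irrelevant to the subdifferential, the whole statement reduces to showing $\partial(h + r)(\bar{x}) = \partial h(\bar{x})$.

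The heart of the argument, and the step I expect to be the main technical point, is to show that the remainder $r$ is strictly differentiable at $\bar{x}$ with $\nabla r(\bar{x}) = 0$. Let $L_1, L_2$ be Lipschitz constants for $f_1, f_2$ on a neighborhood of $\bar{x}$, so that $|f_i(x) - f_i(\bar{x})| \leq L_i\|x - \bar{x}\|$ there. For $x, x'$ near $\bar{x}$ I would add and subtract a cross term to obtain
\[
r(x) - r(x') = \bigl(f_1(x) - f_1(\bar{x})\bigr)\bigl(f_2(x) - f_2(x')\bigr) + \bigl(f_2(x') - f_2(\bar{x})\bigr)\bigl(f_1(x) - f_1(x')\bigr),
\]
whence $|r(x) - r(x')| \leq L_1 L_2\bigl(\|x - \bar{x}\| + \|x' - \bar{x}\|\bigr)\|x - x'\|$. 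Dividing by $\|x - x'\|$ and letting $x, x' \to \bar{x}$ shows that the strict-differentiability difference quotient, taken with candidate gradient $0$, tends to $0$; this establishes strict differentiability of $r$ at $\bar{x}$ with zero gradient, and in particular $\partial r(\bar{x}) = \{0\}$. Note that only the Lipschitz property and the vanishing of the two factors at $\bar{x}$ are used in this estimate, so the sign hypothesis $f_i \geq 0$ does not actually enter.

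Finally, I would invoke the exact sum rule for a strictly differentiable perturbation (see \cite{Rockafellar}): if $r$ is strictly differentiable at $\bar{x}$ and $\phi$ is finite at $\bar{x}$, then $\partial(\phi + r)(\bar{x}) = \partial\phi(\bar{x}) + \nabla r(\bar{x})$. Taking $\phi = h$ yields $\partial(h + r)(\bar{x}) = \partial h(\bar{x})$, and together with the constant-shift reduction this gives $\partial g(\bar{x}) = \partial h(\bar{x})$, the claimed equality. If one prefers not to quote the strictly-differentiable sum rule, the same conclusion follows symmetrically from the Lipschitz sum inclusion of Proposition~\ref{Prop-Sub.1}: applying it to $h + r$ gives $\partial(h+r)(\bar{x}) \subset \partial r(\bar{x}) + \partial h(\bar{x}) = \partial h(\bar{x})$, while applying it to the decomposition $h = (h + r) + (-r)$ (note $-r$ is also strictly differentiable with zero gradient, so $\partial(-r)(\bar{x}) = \{0\}$) gives the reverse inclusion $\partial h(\bar{x}) \subset \partial(h + r)(\bar{x})$, and the two inclusions close the argument.
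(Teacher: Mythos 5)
Your proof is correct, but there is no paper proof to compare it against: the paper states this proposition as an imported result, citing Theorem 7.1 of Mordukhovich and Shao \cite{Mordukhovich}, where it is proved in the much more general setting of Asplund spaces via sequential subdifferential calculus. Your argument is therefore a genuinely different and more elementary route, self-contained in $R^n$. The algebraic reduction is exact: with $r(x)=\bigl(f_1(x)-f_1(\bar{x})\bigr)\bigl(f_2(x)-f_2(\bar{x})\bigr)$ one indeed has $f_1f_2=\bigl(f_2(\bar{x})f_1+f_1(\bar{x})f_2\bigr)+r-f_1(\bar{x})f_2(\bar{x})$, and your two-point estimate correctly establishes strict differentiability of $r$ at $\bar{x}$ with $\nabla r(\bar{x})=0$. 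The closing step is also sound, either via the exact sum rule for a strictly differentiable summand, or via your symmetric double application of Proposition \ref{Prop-Sub.1}, which is legitimate because $\pm r$ are locally Lipschitz near $\bar{x}$ (so that proposition applies) and $\partial(\pm r)(\bar{x})=\{0\}$. That last fact --- strict differentiability with zero gradient forces the limiting subdifferential to be $\{0\}$ --- is the only step you assert without argument; it is standard, and in fact it follows from your own estimate: if $\xi\in\hat{\partial}r(x)$ with $x$ near $\bar{x}$ and $\xi\neq 0$, testing the Fr\'echet inequality along $y=x+t\xi/\|\xi\|$, $t\downarrow 0$, together with $|r(y)-r(x)|\leq L_1L_2\bigl(\|y-\bar{x}\|+\|x-\bar{x}\|\bigr)\|y-x\|$, gives $\|\xi\|\leq 2L_1L_2\|x-\bar{x}\|$, so every limiting subgradient at $\bar{x}$ vanishes. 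What your route buys: it avoids the infinite-dimensional machinery entirely, and it shows that in $R^n$ the hypothesis $f_i\geq 0$ is superfluous --- it is simply inherited from the statement of the cited theorem. What the citation buys the paper: validity in a far more general framework, with no need to develop these estimates.
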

\begin{proposicao}[\cite{Rockafellar}, Proposition 5.15]
A mapping $S:R^n \rightarrow P (R^m)$ is locally bounded if and only if $S(B)$ is bounded for every bounded set $B$.
\label{prop Sub 1}
\end{proposicao}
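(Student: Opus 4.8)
The plan is to work from the definition of local boundedness for a set-valued mapping: $S: R^n \rightarrow P(R^m)$ is locally bounded when every point $x \in R^n$ admits a neighborhood $V$ whose image $S(V) = \bigcup_{x' \in V} S(x')$ is a bounded subset of $R^m$, where for any set $A$ we write $S(A) = \bigcup_{x' \in A} S(x')$. With this in hand I would prove the two implications separately.

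For the reverse implication ($\Leftarrow$), I assume $S(A)$ is bounded for every bounded $A$. Fixing an arbitrary $x \in R^n$, I take the closed unit ball $V$ centered at $x$; since $V$ is bounded, $S(V)$ is bounded by hypothesis, so $V$ witnesses local boundedness of $S$ at $x$. As $x$ was arbitrary, $S$ is locally bounded. This direction is essentially immediate from the definition.

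For the forward implication ($\Rightarrow$), I assume $S$ is locally bounded and let $B$ be any bounded set. The key idea is to pass to the closure $\bar{B}$, which by the Heine--Borel theorem is compact in $R^n$. Local boundedness supplies, for each $x \in \bar{B}$, an open neighborhood $V_x$ with $S(V_x)$ bounded, and the family $\{V_x : x \in \bar{B}\}$ is then an open cover of the compact set $\bar{B}$. Extracting a finite subcover $V_{x_1}, \dots, V_{x_k}$, I would conclude $S(B) \subseteq S(\bar{B}) \subseteq \bigcup_{i=1}^{k} S(V_{x_i})$, a finite union of bounded sets and hence bounded.

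The main obstacle is the compactness step in the forward direction: it is precisely compactness that lets me replace a possibly infinite cover by boundedness-neighborhoods with a finite one, and only a \emph{finite} union of bounded sets is guaranteed to be bounded. Everything else reduces to unwinding the definition of local boundedness and the elementary fact that a finite union of bounded sets is bounded, so I expect no further difficulty once the Heine--Borel reduction is in place.
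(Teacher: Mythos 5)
Your proof is correct. Note that the paper itself offers no proof of this proposition---it is quoted directly from Rockafellar and Wets (\emph{Variational Analysis}, Proposition 5.15)---and your argument (the trivial reverse direction via a bounded ball, and the forward direction via compactness of $\bar{B}$, a finite subcover of boundedness-neighborhoods, and the fact that a finite union of bounded sets is bounded) is exactly the standard proof given in that reference.
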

\begin{proposicao}[\cite{Rockafellar}, Theorem 9.13]
Suppose $h: R^n \rightarrow R \cup \{ \pm \infty \}$ is locally lower semi-continuous at $\bar{x}$ with $h(\bar{x})$ finite.
Then the following conditions are equivalent:
\begin{enumerate}
\item [{\rm (a)}] $h$ is locally Lipschitz continuous at $\bar{x}$,
\item [{\rm (b)}] the mapping $\hat{\partial} h: x \mapsto \hat{\partial} h(x)$ is locally bounded at $\bar{x}$,
\item [{\rm (c)}] the mapping $\partial h: x \mapsto \partial h(x)$ is locally bounded at $\bar{x}$.
\end{enumerate}
Moreover, when these conditions hold, $\partial h(\bar{x})$ is non-empty and compact.
\label{prop Sub 2} 
\end{proposicao}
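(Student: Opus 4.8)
The statement is a three-way equivalence, so the plan is to prove it as a cycle (a) $\Rightarrow$ (b) $\Rightarrow$ (c) $\Rightarrow$ (a), using throughout the characterization of local boundedness in Proposition \ref{prop Sub 1} (local boundedness at $\bar{x}$ amounts to a uniform norm bound on the subgradients over some neighborhood of $\bar{x}$). The one structural fact I would record at the outset is the inclusion $\hat{\partial} h(x) \subseteq \partial h(x)$ for every $x$, which follows immediately from the definition of the limiting subdifferential by taking the constant sequence $x_n \equiv x$. This inclusion alone already yields the two cheap implications: (c) $\Rightarrow$ (b), because a uniform bound on the larger map $\partial h$ over a neighborhood $U$ of $\bar{x}$ restricts to the smaller map $\hat{\partial} h$; and, conversely, (b) $\Rightarrow$ (c), because every $x^* \in \partial h(x)$ is by construction a limit of Fr\'echet subgradients $x_n^* \in \hat{\partial} h(x_n)$ with $x_n \to x$, so a uniform bound $\|x_n^*\| \leq M$ valid on $U$ passes to the limit and bounds $\|x^*\|$ as well.

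For (a) $\Rightarrow$ (b): assume $h$ is Lipschitz with constant $L$ on a ball $B(\bar{x},\delta)$. Fix $x$ in that ball and $x^* \in \hat{\partial} h(x)$. Testing the defining inequality of $\hat{\partial} h(x)$ along rays $y = x + t u$ with $\|u\| = 1$ and $t \downarrow 0$, and using $h(x+tu) - h(x) \leq L t$, gives $\langle x^*, u\rangle \leq L$ for every unit vector $u$; taking $u = x^*/\|x^*\|$ forces $\|x^*\| \leq L$. Hence $\hat{\partial} h$ is bounded by $L$ on $B(\bar{x},\delta)$, which is local boundedness at $\bar{x}$ by Proposition \ref{prop Sub 1}.

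The crux of the argument is the remaining implication (c) $\Rightarrow$ (a), where one must manufacture Lipschitz continuity out of a mere norm bound on the subgradients; this is the step I expect to be the main obstacle, since it cannot be done by elementary manipulation of the subdifferential definition. The tool I would invoke is a standard subdifferential mean value inequality of Mordukhovich type: for a lower semi-continuous $h$ and points $a,b$ lying in the neighborhood where $\partial h$ is bounded by $M$, there exist a point $c \in [a,b)$ and $c^* \in \partial h(c)$ with $h(b) - h(a) \leq \langle c^*, b - a\rangle$. Combining this with $\|c^*\| \leq M$ and the Cauchy--Schwarz inequality yields $h(b) - h(a) \leq M\|b - a\|$, and interchanging the roles of $a$ and $b$ upgrades this to $|h(b) - h(a)| \leq M\|b - a\|$ on a possibly smaller ball, i.e.\ local Lipschitz continuity at $\bar{x}$. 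A minor technical point is that the mean value inequality must be applied on a convex neighborhood on which $h$ is finite; local lower semi-continuity together with the finiteness of $h(\bar{x})$ is what lets one secure such a neighborhood after shrinking.

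Finally, for the ``moreover'' part, once the equivalent conditions hold the limiting subdifferential $\partial h(\bar{x})$ is closed---this is automatic, since it is defined as a set of limits---and bounded by local boundedness, hence compact. Non-emptiness I would obtain from the local Lipschitz property just established: applying Ekeland's variational principle to the perturbed functions $h(\cdot) + \varepsilon\|\cdot - \bar{x}\|$ produces points near $\bar{x}$ at which $\hat{\partial} h$ is non-empty, and the uniform bound from local boundedness lets one extract a convergent subsequence of such Fr\'echet subgradients whose limit lies, by definition, in $\partial h(\bar{x})$.
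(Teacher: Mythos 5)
First, a point of comparison: the paper does not prove this statement at all --- it is imported verbatim from Rockafellar and Wets \cite{Rockafellar} (Theorem 9.13) as a background tool, so there is no internal proof to measure against, and your proposal must be judged on its own merits. Its architecture is sound: the inclusion $\hat{\partial}h(x) \subseteq \partial h(x)$, the two cheap implications between (b) and (c), and the ray argument for (a) $\Rightarrow$ (b) are all correct, and deriving (c) $\Rightarrow$ (a) from an approximate mean value theorem of Zagrodny--Mordukhovich type is a legitimate (if heavy) standard route, with no circularity, since that theorem rests on Ekeland's principle and fuzzy calculus only.

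Two steps, however, do not hold up as written. First, in (c) $\Rightarrow$ (a) you claim that local lower semicontinuity plus finiteness of $h(\bar{x})$ lets you ``secure'' a convex neighborhood on which $h$ is finite before applying the mean value inequality. That is false: $h = \delta_{\{\bar{x}\}}$, the indicator of the single point, is lower semicontinuous with $h(\bar{x})$ finite yet equals $+\infty$ everywhere else. Finiteness of $h$ near $\bar{x}$ cannot be secured in advance; it must come out of the subgradient bound itself. The correct argument uses the extended ($\liminf$) form of the mean value inequality, which applies when only $h(a)$ is known to be finite and then forces $h(b) \leq h(a) + M\|b-a\| < \infty$, establishing finiteness and the Lipschitz estimate simultaneously. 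Second, the non-emptiness argument via Ekeland with the perturbation $h(\cdot) + \varepsilon\|\cdot - \bar{x}\|$ fails: at a minimizer $x_\varepsilon$ you get $0 \in \hat{\partial}\bigl(h + \varepsilon\|\cdot - x_\varepsilon\|\bigr)(x_\varepsilon)$, but the Fr\'echet sum rule only gives the inclusion $\hat{\partial}h(x) + \hat{\partial}g(x) \subseteq \hat{\partial}(h+g)(x)$, so you cannot split off a subgradient of $h$ from a nonsmooth perturbation at its vertex; unwinding the definitions, the minimality inequality $h(y) - h(x_\varepsilon) \geq -\varepsilon\|y - x_\varepsilon\|$ only shows that $0$ is an $\varepsilon$-Fr\'echet subgradient, not a Fr\'echet subgradient. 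The standard repair is a quadratic (proximal) perturbation: minimize $h + \frac{1}{2\lambda}\|\cdot - \bar{x}\|^2$ over a small closed ball; minimizers $x_\lambda$ exist by lower semicontinuity and compactness, converge to $\bar{x}$ with $h(x_\lambda) \rightarrow h(\bar{x})$, and the smoothness of the quadratic term yields $-(x_\lambda - \bar{x})/\lambda \in \hat{\partial}h(x_\lambda)$; after this, your bounded-extraction argument closes the proof. (Alternatively, in this finite-dimensional locally Lipschitz setting, Rademacher's theorem supplies gradients at points arbitrarily close to $\bar{x}$.)
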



\section{Multi-objective programming - preliminary concepts}
\label{mp}
\paragraph{}
Only the concepts and results that are fundamental to the course of our work are presented. For more details, see, for example, Miettinen \cite{Kaisa} and Chinchuluun et al. \cite{Chinchuluun1}.

\begin{definicao}   
$a\in R^n$ is said to be a {\it\bf local pareto solution} to the problem (\ref{probmultiobjetivo}) if there is
a disc $B_{\delta}(a)\subset R^n$, with $\delta>0$, such that there is no $x\in B_{\delta}(a)$ that satisfies  $F_i(x)\leq F_i(a)$ for all $i = 1,...,m$ and $F_j(x) <  F_j(a)$ for at least one index $j\in\{1,...,m\}$.
\end{definicao} 

\begin{definicao}
$a\in R^n$ is known as a {\it\bf weak local pareto solution} if there is a disc $B_{\delta}(a)\subset R^n$, with $\delta>0$, such that there is no $x\in B_{\delta}(a)$ that satisfies  $F_i(x) < F_i(a)$ for all $i = 1,...,m$.
\end{definicao}

Generally, if a constrained or unconstrained multi-objective optimization problem is a convex problem, that is, if an objective function $F:R^n\rightarrow R^m$ is a convex function, then any (weak) local pareto solution is also a (weak) global pareto solution. This result is discussed in Theorem 2.2.3, in Miettinen \cite{Kaisa}. 

Let $\textnormal{argmin} \{F(x)|x \in R^n\}$ and $\textnormal{argmin}_w\{F(x)|x\in R^n\}$ denote the local pareto solution set and the local weak pareto solution set to the problem (\ref{probmultiobjetivo}). It is easy to see that
$\textnormal{argmin} \{F(x)|x\in R^n\}\subset \textnormal{argmin}_w \{F(x)|x\in
R^n\}$. 

\begin{definicao}
A real-valued function
$f:R^n\longrightarrow R$ is said to be a {\bf strict scalar representation} of
a map $F = (F_1,...,F_m):R^n\longrightarrow R^m$ when given $x,\bar{x}\in R^n$ 
\begin{equation*}
F_i(x)\leq F_i(\bar{x}), \ \forall \ i = 1,...,m \Longrightarrow f(x)\leq f(\bar{x})
\end{equation*}
 and 
\begin{equation*}
 F_i(x) < F_i(\bar{x}), \ \forall \ i = 1,...,m \Longrightarrow f(x)<f(\bar{x}).
\end{equation*}
Futhermore, $f$ is said to be a {\bf weak scalar representation} of $F$ if
\begin{equation*}
 F_i(x) < F_i(\bar{x}), \ \forall \ i = 1,...,m \Longrightarrow f(x)<f(\bar{x}).
\end{equation*} 
\label{escalarizacao1} 
\end{definicao} 
\vspace{-0,7cm}

It is obvious that all strict scalar representations are weak scalar representations. The next result demonstrates an interesting way to obtain scalar representation for applications. To demonstrate, see \cite{Gregorio}, proposition 1.
\begin{proposicao}
Let $f:R^n\longrightarrow R$ be a function. $f$ is a strict scalar representation of $F$ if, and only if $f$ is a composition of $F$ with a strictly increasing function $g:F\left(R^n\right)\longrightarrow R$.
\label{repconvexa}
\end{proposicao}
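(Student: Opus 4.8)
The plan is to prove the two implications separately, treating the backward (``if'') direction as routine and the forward (``only if'') direction, in which $g$ must be constructed, as the part needing care. Throughout I read ``$g:F(R^n)\to R$ strictly increasing'' with respect to the componentwise order on $R^m$, namely: $u\leq v$ (all coordinates, non-strict) implies $g(u)\leq g(v)$, and $u<v$ (all coordinates strict) implies $g(u)<g(v)$. This matches exactly the two defining implications of a strict scalar representation in Definition \ref{escalarizacao1}, which is why the correspondence works.

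For the ``if'' direction I would assume $f=g\circ F$ with $g$ strictly increasing and simply translate. If $F_i(x)\leq F_i(\bar{x})$ for all $i$, then $F(x)\leq F(\bar{x})$ in the componentwise order, so $f(x)=g(F(x))\leq g(F(\bar{x}))=f(\bar{x})$ by monotonicity of $g$; and if $F_i(x)<F_i(\bar{x})$ for all $i$, the same computation using the strict monotonicity of $g$ gives $f(x)<f(\bar{x})$. Hence $f$ satisfies both implications of Definition \ref{escalarizacao1} and is a strict scalar representation. There is no obstacle here.

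For the ``only if'' direction the key idea is to define $g$ on the image $F(R^n)$ by $g(u):=f(x)$ for any $x\in R^n$ with $F(x)=u$. The heart of the argument, and the one genuinely subtle point, is well-definedness: if $F(x)=F(\bar{x})$, then both $F_i(x)\leq F_i(\bar{x})$ and $F_i(\bar{x})\leq F_i(x)$ hold for every $i$, so applying the first implication of Definition \ref{escalarizacao1} in each direction yields $f(x)\leq f(\bar{x})$ and $f(\bar{x})\leq f(x)$, whence $f(x)=f(\bar{x})$. Note this step uses the non-strict (monotone) implication in both directions, not merely the strict one. Granting well-definedness, the identity $f=g\circ F$ holds by construction.

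Finally I would verify that the $g$ so defined is strictly increasing, which again is a direct transfer. Given $u\leq v$ in $F(R^n)$, choose preimages $x,\bar{x}$ with $F(x)=u$, $F(\bar{x})=v$; then $F_i(x)\leq F_i(\bar{x})$ for all $i$, so the first implication gives $g(u)=f(x)\leq f(\bar{x})=g(v)$, and the strict case $u<v$ follows identically from the second implication. I anticipate that the only real care needed is the well-definedness of $g$; everything else is a mechanical translation between the componentwise order on $F(R^n)$ and the two conditions of Definition \ref{escalarizacao1}, so I would keep the written proof short and self-contained along these lines (cf. \cite{Gregorio}, Proposition 1).
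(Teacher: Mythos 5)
Your proof is correct, and it takes the same route as the argument the paper relies on: the paper itself omits the proof (deferring to \cite{Gregorio}, Proposition 1), and your construction --- defining $g$ on $F(R^n)$ by $g(F(x)) := f(x)$, establishing well-definedness by applying the non-strict implication of Definition \ref{escalarizacao1} in both directions, and then transferring the two monotonicity properties between the componentwise order on $F(R^n)$ and the two conditions of the definition --- is precisely that standard factorization argument. Your explicit reading of ``strictly increasing'' (componentwise $\leq$ yields $\leq$, componentwise $\ll$ yields $<$) is also the correct one to flag: with the weaker reading in which only the strict implication is required of $g$, the ``if'' direction would fail, so making this convention explicit closes a genuine ambiguity in the statement rather than padding the proof.
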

According to Proposition 2.9 in Luc \cite{Luc}, to obtain the convexity of the scalar problem, it is necessary to choose a convex increasing function $g$ from $F(R^n)$ to $R$. The next result establishes an important relation between the sets $\textnormal{argmin}\left\{f(x)|x\in R^n\right\}$ and $\textnormal{argmin}_w \{F(x)|x\in R^n\}$. The Proof follows immediately from the Definition \ref{escalarizacao1}.   
\begin{proposicao}
Let $f:R^n\longrightarrow R$ be a weak scalar representation of a map $F:R^n\longrightarrow R^m$ and $\textnormal{argmin}\left\{f(x)|x\in R^n\right\}$ the local
 minimizer set of $f$. Therefore:
\begin{equation*}
\textnormal{argmin}\left\{f(x)|x\in R^n\right\}\subset \textnormal{argmin}_w \{F(x)|x\in R^n\}.
\end{equation*} \label{escalarizacao2}
\end{proposicao}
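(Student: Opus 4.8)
The plan is to take an arbitrary local minimizer $a$ of $f$ and show directly that it satisfies the defining property of a weak local pareto solution, arguing by contradiction against the weak scalar representation hypothesis. First I would invoke the local minimizer hypothesis: since $a \in \textnormal{argmin}\{f(x)|x\in R^n\}$, there exists a disc $B_\delta(a)$, with $\delta > 0$, on which $f(a) \leq f(x)$ for every $x \in B_\delta(a)$. This very same $\delta$ is the candidate radius I would use to certify that $a$ is a weak local pareto solution, so the two ``there is a disc'' clauses get linked by reusing the same disc.

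Next I would suppose, for contradiction, that within this disc there is a point strictly dominating $a$; that is, some $x \in B_\delta(a)$ with $F_i(x) < F_i(a)$ for all $i = 1,\ldots,m$. Applying the defining implication of a weak scalar representation (Definition \ref{escalarizacao1}) to the pair $x,a$ then yields $f(x) < f(a)$, which directly contradicts the local minimality inequality $f(a) \leq f(x)$ obtained in the first step. Hence no such dominating point can exist in $B_\delta(a)$, which is precisely the statement that $a$ is a weak local pareto solution. Since $a$ was an arbitrary element of $\textnormal{argmin}\{f(x)|x\in R^n\}$, the inclusion $\textnormal{argmin}\{f(x)|x\in R^n\} \subset \textnormal{argmin}_w\{F(x)|x\in R^n\}$ follows.

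The argument is essentially a one-line implication once the quantifiers are disentangled, so there is no genuine analytical obstacle. The only point requiring care is the logical bookkeeping: the disc furnished by local minimality must be reused verbatim as the disc witnessing the weak pareto property, and one must correctly negate the weak pareto condition (namely, the existence of a point with $F_i(x) < F_i(a)$ for all $i$) before feeding it into the implication defining the weak scalar representation. No convexity, continuity, or coercivity of $F$ or $f$ is needed; the strict inequalities appearing in the weak scalar representation definition are exactly those appearing in the weak pareto definition, which is what makes the contradiction close cleanly.
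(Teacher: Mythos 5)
Your proof is correct and is precisely the argument the paper has in mind: the paper itself gives no explicit proof, stating only that the result ``follows immediately from the Definition \ref{escalarizacao1},'' and your write-up is exactly that immediate argument spelled out (reuse the disc from local minimality, negate the weak pareto property, and apply the weak scalar representation implication to reach a contradiction). No gaps; the quantifier bookkeeping you flag is handled correctly.
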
 

\section{Logarithmic Quasi-Distance Proximal point Scalarization $(LQDPS)$ Method}
\label{metodo}
Greg\'orio and Oliveira \cite{Gregorio} showed the existence of a function $f:R^n\times R^m_+\longrightarrow R$ that exhibits the following properties:
{\small
\begin{itemize}
\item[(P1)] $f$ is bounded below for any $\alpha\in R$, i.e, $f(x,z)\geq \alpha$ for every $(x,z)\in R^n\times R^m_+$; 
\item[(P2)] $f$ is convex in $R^n\times R^m_+$, i.e., given $(x_1,z_1), (x_2,z_2)\in R^n\times R^m_+$ and $\lambda\in (0,1)$
\begin{equation*}
f(\lambda(x_1,z_1)+(1-\lambda)(x_2,z_2))\leq \lambda f(x_1,z_1)+(1-\lambda)f(x_2,z_2);
\end{equation*}
\item[(P3)] $f$ is a strict scalar representation of $F$, with respect to $x$, i.e., 
\begin{equation*}
F_i(x)\leq F_i(y) \ \forall i = 1,...,m \ \Rightarrow f(x,z)\leq f(y,z)
\end{equation*}
and
\begin{equation*}
F_i(x) < F_i(y) \ \forall i = 1,...,m \ \Rightarrow f(x,z)<f(y,z)
\end{equation*}
for every $x,y\in R^n$ and $z\in R^m_+$;
\item[(P4)] $f$ is differentiable, with respect to $z$ and
\begin{equation*}
\frac{\partial}{\partial z}f(x,z)=h(x,z),
\end{equation*}
where $h(x,z)=(h_1(x,z), \cdots, h_m(x,z))^T$ is a continuous map from $R^n\times R^m$ to $R^m_+$, i.e, $h_i(x,z)\geq 0$ for all $i=1,\cdots, m$.
\end{itemize}} 

More precisely, they showed that the function $f:R^n\times R^m_+\longrightarrow R$ such that  
\begin{equation}
f(x,z) = \sum\limits_{i=1}^m {\rm exp}(z_i + F_i(x)) \label{exemplo Gregorio e Oliveira}
\end{equation}
satisfies the properties $(P1)$ to $(P4)$. As another example, consider the following proposition:
\begin{proposicao}
 Let $F = (F_1,F_2,...,F_m): R^n \rightarrow R^m$ be a convex application, then $f:R^n \times R^m_+ \rightarrow R$ 
such that $f(x,z)=\sum\limits_{i=1}^m \left[ z_i + h(F_i(x))\right]$ where 
$ h(F_i(x)) =  \left\{
\begin{matrix}
\frac{1}{2 - F_i(x)} & if &  F_i(x) \leq 1 \\  (F_i(x))^2 & if &  F_i(x) > 1 
\end{matrix}
\right.
$
satisfies the properties (P1) to (P4).
\label{exemplo p5}
\end{proposicao}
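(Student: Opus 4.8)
The plan is to establish the four properties in turn, after first recording the basic analytic features of the scalar auxiliary function $h:R\rightarrow R$ appearing in the definition, which is really the heart of the matter. I would begin by studying $h$ on its own. Continuity at the breakpoint $t=1$ is immediate, since the left value is $\frac{1}{2-1}=1$ and the right value is $1^2=1$, so $h$ is continuous on all of $R$. On $(-\infty,1)$ one computes $h'(t)=\frac{1}{(2-t)^2}>0$ and $h''(t)=\frac{2}{(2-t)^3}>0$ (using $2-t\geq 1>0$), while on $(1,\infty)$ one has $h'(t)=2t>0$ and $h''(t)=2>0$. The crucial observation is that at $t=1$ the left derivative equals $1$ and the right derivative equals $2$, so the derivative jumps \emph{upward}; combined with convexity of each branch this shows $h$ is convex on all of $R$ and strictly increasing. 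Finally, $h(t)\in(0,1]$ for $t\leq 1$ and $h(t)>1$ for $t>1$, so $h>0$ everywhere.

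With these facts in hand the properties follow quickly. For (P1), since $z_i\geq 0$ and $h(F_i(x))>0$ for every $i$, we obtain $f(x,z)\geq 0$, so $f$ is bounded below. For (P2), each map $x\mapsto h(F_i(x))$ is convex, being the composition of the convex nondecreasing function $h$ with the convex function $F_i$; the term $\sum_{i=1}^m z_i$ is linear in $z$; hence $f$, a sum of a convex function of $x$ and a linear function of $z$, is jointly convex on $R^n\times R^m_+$.

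For (P3) I would argue directly from the definition of strict scalar representation. If $F_i(x)\leq F_i(y)$ for all $i$, then monotonicity of $h$ gives $h(F_i(x))\leq h(F_i(y))$; summing over $i$ and adding $\sum_{i=1}^m z_i$ to both sides yields $f(x,z)\leq f(y,z)$. If instead $F_i(x)<F_i(y)$ for all $i$, strict monotonicity of $h$ gives $h(F_i(x))<h(F_i(y))$ for every $i$, and summing produces the strict inequality $f(x,z)<f(y,z)$. (Alternatively one may invoke Proposition \ref{repconvexa} with the strictly increasing function $g(w)=\sum_{i=1}^m h(w_i)$.) Property (P4) is the easiest: since the terms $h(F_i(x))$ do not depend on $z$, we have $\frac{\partial}{\partial z}f(x,z)=e=(1,\dots,1)^T$, a constant vector that is trivially a continuous map from $R^n\times R^m$ into $R^m_+$.

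The only delicate point is the behaviour of $h$ at the junction $t=1$: one must confirm that the one-sided derivatives satisfy $1\leq 2$, i.e. (left)$\,\leq\,$(right), so that the two convex branches glue into a globally convex function, and that the two branches agree in value there so that $h$ is continuous. Everything else is routine once $h$ is known to be continuous, positive, convex and strictly increasing, so I expect this gluing check at the kink to be the one step requiring genuine care.
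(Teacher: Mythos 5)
Your proof is correct and follows essentially the same route as the paper: establish that $h$ is positive, strictly increasing and convex, then deduce (P1) from positivity, (P2) from the convex-increasing composition with the convex $F_i$ (the paper writes this composition argument out as an explicit chain of inequalities), (P3) from strict monotonicity, and (P4) by noting $\frac{\partial}{\partial z}f(x,z)=(1,\dots,1)$. The only difference is that you carefully verify the gluing of the two branches of $h$ at $t=1$ (matching values, one-sided slopes $1\leq 2$), a point the paper dismisses as ``easy to see''; this is a welcome addition, not a divergence.
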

\begin{proof} 
It is easy to see that $h: R\rightarrow R$ given by $ h(x) =  \left\{
\begin{matrix}
\frac{1}{2 - x} & if &  x \leq 1 \\  x^2 & if &  x > 1 
\end{matrix}
\right.
$ 
is positive ($h > 0$), convex and strictly increasing.
It will now be shown that $f(x,z)=\sum\limits_{i=1}^m \left[ z_i + h(F_i(x))\right]$ satisfies the properties (P1) to (P4). 
{\bf (P1):}  $h(x) > 0 \  \forall x\in R$ and $z_i\geq 0  \ \forall i=1,...,m$ implies $f(x,z) > 0 \ \forall (x,z)\in R^n\times R_+^m$.
{\bf (P2):} Be $(x,z),(\bar{x},\bar{z})\in R^n\times R^m_+$ and $\alpha\in [0,1]$. Then, as $F_i$ is convex $\forall i = 1,...,m$ and $h$ is
strictly increasing and convex: {\small
\begin{eqnarray*}
 f(\alpha(x,z) + (1 - \alpha)(\bar{x},\bar{z})) &  =   & \sum\limits_{i=1}^m \left[ \alpha z_i + (1 - \alpha)\bar{z_i} + h \left( F_i(\alpha x + (1 - \alpha)\bar{x}) \right) \right] \nonumber \\
                                                & \leq & \sum\limits_{i=1}^m \left[ \alpha z_i + (1 - \alpha)\bar{z_i} + h \left( \alpha F_i(x) + (1 - \alpha) F_i(\bar{x}) \right) \right] \nonumber \\
                                                & \leq & \sum\limits_{i=1}^m \left[ \alpha z_i + (1 - \alpha)\bar{z_i} + \alpha h \left(F_i(x)\right) + (1 - \alpha) h\left( F_i(\bar{x})\right) \right] \nonumber \\
                                                &  =   & \alpha \sum\limits_{i=1}^m \left( z_i + h \left(F_i(x)\right) \right) + (1 - \alpha)\sum\limits_{i=1}^m \left( \bar{z_i} + h\left( F_i(\bar{x})\right)\right) \nonumber \\
                                                &  =   & \alpha f(x,z) + (1 - \alpha)f(\bar{x},\bar{z}) \nonumber.
\end{eqnarray*}}
{\bf (P3):} Consider $\bar{z}\in R^m_+$ fixed. As $h$ is strictly increasing, $F_i(x)\leq F_i(y) \ \forall i = 1,...,m$ implies $\bar{z_i} + h(F_i(x))\leq \bar{z_i} + h(F_i(y)), \ \forall i = 1,...,m$ and
therefore, $\sum\limits_{i=1}^m \left[ \bar{z_i} + h(F_i(x))\right] \leq \sum\limits_{i=1}^m \left[\bar{z_i} + h(F_i(y))\right]$, i.e., $f(x,\bar{z})\leq f(y,\bar{z})$. 
Similarly $F_i(x) < F_i(y) \ \forall i = 1,...,m$ implies $f(x,\bar{z}) < f(y,\bar{z})$. 
{\bf (P4)} It can be easily shown that $\frac{\partial}{\partial z}f(x,z)=(1,1,...,1).$
\end{proof} 
\ \\ 
{\bf Notation:} \ \ Let $y,\bar{y} \in R^m$, then $y \leq \bar{y} \Longleftrightarrow y_i\leq\bar{y}_i \ \forall i = 1,...,m$ and
$y\ll\bar{y}\Longleftrightarrow y_i < \bar{y}_i \ \forall i = 1,...,m$. \\
\ \\ 
{\bf The $(LQDPS)$ Method:}\\
Let $F:R^n\longrightarrow R^m$ be convex and $q: R^n \times R^n \rightarrow R_+$ a quasi-distance application, satisfying (\ref{prop-q}). Given the initial points $x^0\in R^n$, $z^0\in R^m_{++}$ and sequences $\beta^k > 0, k=0,1,\cdots$ and 
$0 < l < \mu^k < L \ \forall k = 1,2,...$,
the logarithmic quasi-distance proximal point scalarization $(LQDPS)$ method generates sequences $\left\{x^k\right\}_{k\in N}\subset R^n$  and 
$\left\{z^k\right\}_{k\in N}\subset R^m_{++}$ with the iterates $x^{k+1}$ and $z^{k+1}$ defined as the solution of the $(LQDPS)$ problem\\
\begin{equation}
{\rm min} \ \varphi^k(x,z)=f(x,z)+\beta^k\displaystyle\langle\frac{z}{z^k}-{\rm log}
\  \frac{z}{z^k}-e,e\displaystyle\rangle+\frac{\mu^k}{2}q^2(x,x^k),\label{exponencial}
\end{equation}
\begin{equation*}
x\in \Omega^k, z\in R^m_{++},
\end{equation*}
where $f:R^n\times R^m_+\longrightarrow R$ satisfies the properties (P1) to (P4), $\displaystyle\frac{z}{z^k}$ 
and ${\rm log}\displaystyle \frac{z}{z^k}$ which are the vectors whose $ith$ components are given 
by $\displaystyle\frac{z_i}{z^k_i}$ and ${\rm log}\displaystyle\frac{z_i}{z^k_i}$, respectively,  
$e\in R^m$ is the vector with all components equal to 1 and $\Omega^k=\{x\in R^n|F(x)\leq F(x^k)\}$. 
\subsection{Well-posedness}
The function $\varphi^k:R^n\times R^m_{++}\longrightarrow R$ in (\ref{exponencial}), was considered by Greg\'orio and Oliveira \cite{Gregorio} with a 
variant of the logarithm-quadratic function of Auslender et al. \cite{Auslender} as regularization 
and, in that case, due to the strict convexity of the function $\varphi^k$, they showed that the method's iterations are unique and 
within the constraints. As, in the present problem, the quasi-distance is not necessarily a convex function, we will not be able to demonstrate the uniqueness of the iterations, nor that the iterations $x^{k+1}$ are within the constraints $\Omega^k$. 
Therefore, we will have to act differently to assure that the sequences are well-defined and also to find their characterizations.
\begin{lema}
Let $F:R^n\longrightarrow R^m$ be a convex map such that there exists $r\in \{ 1,...,m \}$ satisfying 
$\lim_{\|x\|\rightarrow\infty} F_r(x) = \infty$. Then $\Omega^k, \ \forall k\in N$ is a convex and compact set. Particularly, $\Omega^k\times R^m_+$ is a convex and closed set. 
\label{Compacto}
\label{prop omega}
\end{lema}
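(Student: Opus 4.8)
The plan is to prove the three asserted properties of $\Omega^k=\{x\in R^n : F(x)\le F(x^k)\}$ by first rewriting it as the finite intersection $\Omega^k=\bigcap_{i=1}^m\{x\in R^n : F_i(x)\le F_i(x^k)\}$ of sublevel sets of the component objectives, and then treating convexity, closedness, and boundedness in turn. Convexity is the easiest step: since $F$ is convex with respect to the componentwise order, each $F_i$ is an ordinary convex function on $R^n$, so each sublevel set $\{x : F_i(x)\le F_i(x^k)\}$ is convex, and a finite intersection of convex sets is convex; hence $\Omega^k$ is convex.

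For closedness I would invoke the standard fact that a finite-valued convex function on all of $R^n$ is continuous. This makes each $F_i$ continuous, so each sublevel set $\{x : F_i(x)\le F_i(x^k)\}$ is closed as the preimage of a closed ray under a continuous map, and the finite intersection $\Omega^k$ is therefore closed.

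The only place where the coercivity hypothesis is genuinely used is the boundedness step, and this is the part I expect to be the crux. Since $\Omega^k\subset\{x : F_r(x)\le F_r(x^k)\}$, it suffices to show that this single sublevel set of the coercive objective $F_r$ is bounded. I would argue by contradiction: if it were unbounded there would exist a sequence $\{x^j\}$ in it with $\|x^j\|\to\infty$, but then the hypothesis $\lim_{\|x\|\to\infty}F_r(x)=\infty$ forces $F_r(x^j)\to\infty$, contradicting $F_r(x^j)\le F_r(x^k)$. Hence the sublevel set, and with it $\Omega^k$, is bounded. Combining closedness and boundedness, the Heine--Borel theorem yields that $\Omega^k$ is compact for every $k\in N$.

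Finally, for the product statement I would note that $\Omega^k\times R^m_+$ is convex as a Cartesian product of the convex sets $\Omega^k$ and $R^m_+$, and closed as a product of closed sets (using that $R^m_+$ is closed). It is deliberately not claimed to be compact, since $R^m_+$ is unbounded. The whole argument is elementary once the coercivity-to-boundedness implication is in place; the one point I would be careful to state explicitly, so as not to leave a gap, is the continuity of finite-valued convex functions used to obtain closed sublevel sets.
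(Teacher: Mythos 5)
Your proposal is correct and follows essentially the same route as the paper: coercivity of $F_r$ yields boundedness via the same contradiction argument, while convexity of the components gives convexity and their continuity gives closedness, with compactness by Heine--Borel. The only (immaterial) difference is that you bound each $\Omega^k$ directly through the sublevel set $\{x : F_r(x)\le F_r(x^k)\}$, whereas the paper bounds $\Omega^0$ and then invokes the nesting $\Omega^k\subseteq\Omega^0$.
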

\begin{proof}
Suppose, for contradiction that $\Omega^0=\{x\in R^n|F(x)\leq F(x^0)\}$ is unbounded. Then there is
$\{x_n\}_{n\in N} \subset \Omega^0$ such that $\|x_n\| \rightarrow \infty$ when $n\rightarrow \infty$. As $\{x_n\}_{n\in N} \subset \Omega^0$, then $F(x_n)\leq F(x^0) \ \forall n\in N$, and then, $F_i(x_n)\leq F_i(x^0), \ \forall \ i = 1,...,m$ and $n\in N$. Therefore, in particular, $F_r(x_n)\leq F_r(x^0) \ \forall n\in N$. Since $F_r$ is coercive and $\|x_n\| \rightarrow \infty$ when $n\rightarrow \infty$, then $``\infty \leq F_r(x^0) < \infty''$, which is a contradiction. So $\Omega^0$ is limited. 
As $\Omega^{k+1} \subseteq \Omega^{k},\ k\geq0$, it follows that $\Omega^k \subseteq \Omega^0,\ k\geq1$ and therefore $\Omega^k$ is limited $\forall k\geq 0$. The convexity of $F$ implies its continuity and the convexity of $\Omega^k, \ \forall k$. It follows from the continuity of $F$ that $\Omega^k, \ \forall k$ is closed. Therefore, $\Omega^k \ \forall k$ is a compact convex set.
\end{proof}
\begin{obs}
Since $\Omega^{k+1}\subseteq \Omega^k, \forall k\in N$ and $\Omega^k, \forall k\in N$ is a compact set, then:  
$\bigcap\limits_{k=0}^{\infty} \Omega^k \neq \emptyset$.
\label{inters omegas} 
\end{obs}
\begin{lema}
The function $H:R^m_{++}\rightarrow R$, such that
\begin{center}
$H(z)= \left< \frac{z}{z^k}-{\rm log}\frac{z}{z^k} - e, e \right> = \displaystyle\|\frac{z}{z^k}-{\rm log} \ \frac{z}{z^k}-e\displaystyle\|_1$ 
\end{center}
where $\|\bullet\|_1$ is the 1-norm on $R^m$ defined by $\|z\|_1=\displaystyle\sum^{m}_{i=1}|z_i|$, is strictly convex, non-negative and coercive.
\label{Logaritmo}
\end{lema}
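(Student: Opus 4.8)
The plan is to reduce everything to a one–dimensional analysis by exploiting the fully separable structure of $H$. Writing $t_i := z_i/z^k_i$ for $i=1,\dots,m$ and introducing the scalar function $g(t) := t - \log t - 1$ on $(0,\infty)$, we have $H(z) = \sum_{i=1}^m g(t_i)$, where each map $z \mapsto t_i = z_i/z^k_i$ is an invertible linear rescaling of the $i$th coordinate (recall $z^k_i > 0$ is fixed). The three asserted properties will then all follow from the corresponding properties of the single function $g$, transferred to the separable sum.

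First I would study $g$ directly. Its derivatives are $g'(t) = 1 - 1/t$ and $g''(t) = 1/t^2 > 0$, so $g$ is strictly convex on $(0,\infty)$ and has a unique global minimizer at $t=1$, where $g(1)=0$. Hence $g(t)\ge 0$ for all $t>0$, with equality only at $t=1$. This immediately yields non-negativity of $H$, together with $H(z)=0 \iff z = z^k$. Moreover, since every component $t_i - \log t_i - 1 = g(t_i)$ of the vector inside the bracket is non-negative, it justifies the stated identity $\langle \frac{z}{z^k} - \log \frac{z}{z^k} - e,\, e\rangle = \|\frac{z}{z^k} - \log \frac{z}{z^k} - e\|_1$, because for a componentwise non-negative vector the sum of the entries equals the sum of their absolute values.

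For strict convexity I would argue that each summand $z \mapsto g(z_i/z^k_i)$ is strictly convex as a function of $z_i$ (a strictly convex function precomposed with a nonsingular linear map remains strictly convex), and that a sum of functions each strictly convex in a separate coordinate is strictly convex on the product domain $R^m_{++}$: given $z \neq w$ in $R^m_{++}$ and $\lambda \in (0,1)$, there is an index $i$ with $z_i \neq w_i$, so the $i$th term lies strictly below its chord while the remaining terms lie at most on theirs, giving a strict inequality overall.

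Finally, for coercivity I would again transfer the scalar behavior: $g(t) \to +\infty$ both as $t \to 0^+$ and as $t \to \infty$. If $\|z\| \to \infty$ with $z \in R^m_{++}$, then some coordinate $z_i \to \infty$ along a subsequence, forcing $g(z_i/z^k_i) \to +\infty$, while all other terms stay bounded below by $0$ (again by $g \ge 0$); hence $H(z) \to +\infty$, so $H$ is coercive. The only step demanding a little care is the identification with the $1$-norm, which is precisely where the sign property $g \ge 0$ is indispensable; the remaining arguments are a routine lifting of the scalar facts about $g$ to the separable sum.
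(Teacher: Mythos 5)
Your proof is correct, but it is worth noting that the paper itself offers no argument for this lemma at all: its ``proof'' is a single line deferring to Lemma~1 of Greg\'orio and Oliveira \cite{Gregorio}. Your proposal therefore supplies what the paper outsources, via the natural route: write $H(z)=\sum_{i=1}^m g(z_i/z^k_i)$ with $g(t)=t-\log t-1$, establish $g''(t)=1/t^2>0$, $g\ge 0$ with equality only at $t=1$, and $g(t)\to\infty$ as $t\to\infty$ and $t\to 0^+$, then lift these facts to the separable sum. All three liftings are sound: the $1$-norm identity follows from componentwise non-negativity; your strict-convexity argument correctly handles the fact that each summand, viewed as a function of the full vector $z$, is only convex (constant in the other coordinates), by isolating one index $i$ with $z_i\neq w_i$ where the chord inequality is strict; and your coercivity argument correctly exploits $z\in R^m_{++}$, since positivity of the coordinates is what forces $\max_i z_i\to\infty$ (rather than some coordinate $\to-\infty$) when $\|z\|\to\infty$, after which a subsequence extraction and $g\ge0$ finish it. One small bonus in your version: the observation that $g(t)\to+\infty$ as $t\to 0^+$ shows $H$ also blows up at the boundary of $R^m_{++}$, which is the stronger property (compact sublevel sets in the open orthant) that the paper's well-posedness argument in Proposition \ref{boa definicao} actually needs when it minimizes over the non-closed set $\Omega^k\times R^m_{++}$; the bare statement ``coercive'' as you and the lemma formulate it covers only $\|z\|\to\infty$.
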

\begin{proof}
See \cite{Gregorio}, demonstration of Lemma 1.
\end{proof}
\begin{proposicao}[Well-posedness]
Let $F:R^n\longrightarrow R^m$ be a convex map such that there exists $r\in \{ 1,...,m \}$ satisfying $\lim_{\|x\|\rightarrow\infty} F_r(x) = \infty$, $q: R^n \times R^n \rightarrow R_+$ a quasi-distance map satisfying (\ref{prop-q}) and $f:R^n\times R^m_+\longrightarrow R$ be a function satisfying the properties (P1) to (P4). Then, for every $k\in N$, there is one solution $\left(x^{k+1},z^{k+1}\right)$ for the $(LQDPS)$ problem.
\label{boa definicao} 
\end{proposicao}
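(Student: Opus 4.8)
The plan is to prove existence by the Weierstrass theorem, so the real work reduces to two things: checking that $\varphi^k$ is lower semicontinuous on the feasible set, and exhibiting a nonempty compact subset of $\Omega^k\times R^m_{++}$ on which the infimum is attained. The only genuine difficulty is that the constraint $z\in R^m_{++}$ describes an \emph{open} set, so a priori a minimizing sequence could drift toward the boundary (some $z_i\to 0$) or escape to infinity, and then no minimizer would exist inside $R^m_{++}$. The device that closes this gap is the barrier/coercivity of $H$ guaranteed by Lemma \ref{Logaritmo}; I would exploit it to confine the $z$-variable to a compact subset of $R^m_{++}$ that stays away from the boundary.

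First I would fix the reference point $(x^k,z^k)$, which is feasible since $F(x^k)\leq F(x^k)$ gives $x^k\in\Omega^k$ and $z^k\in R^m_{++}$ by construction. Because $H(z^k)=0$ and $q(x^k,x^k)=0$, the objective value there is $c:=\varphi^k(x^k,z^k)=f(x^k,z^k)$, and I would work with the sublevel set $S:=\{(x,z)\in\Omega^k\times R^m_{++}:\varphi^k(x,z)\leq c\}$, which is nonempty. For $(x,z)\in S$, property (P1) (so $f\geq\alpha$) together with $q^2\geq 0$ yields $\beta^k H(z)\leq c-\alpha$, hence $H(z)\leq(c-\alpha)/\beta^k=:\gamma$. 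Writing $H(z)=\sum_{i=1}^m g(z_i/z^k_i)$ with $g(t)=t-\log t-1$, one sees that $g$ is strictly convex, nonnegative with minimum $g(1)=0$, and satisfies $g(t)\to\infty$ both as $t\to 0^+$ and as $t\to\infty$; thus the scalar level set $\{t>0:g(t)\leq\gamma\}$ is a compact interval $[a,b]\subset(0,\infty)$. Since each summand is nonnegative, $H(z)\leq\gamma$ forces $z_i/z^k_i\in[a,b]$ for every $i$, so the $z$-component of $S$ lies in the compact set $K:=\prod_{i=1}^m[a\,z^k_i,\,b\,z^k_i]\subset R^m_{++}$, which is bounded away from the boundary. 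Combined with the compactness of $\Omega^k$ from Lemma \ref{Compacto}, this shows $S\subset\Omega^k\times K$, a compact subset of $R^n\times R^m_{++}$.

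Finally I would observe that $\varphi^k$ is continuous on $R^n\times R^m_{++}$: the finite-valued convex function $f$ is continuous on the interior $R^n\times R^m_{++}$ of its domain, $H$ is continuous by Lemma \ref{Logaritmo}, and $q^2(\cdot,x^k)$ is locally Lipschitz, hence continuous, by Proposition \ref{prop-qd 1}. In particular $S$ is closed and, being contained in $\Omega^k\times K$, it is compact. By the Weierstrass theorem $\varphi^k$ attains its minimum over $S$ at some $(x^{k+1},z^{k+1})$; and since every feasible point outside $S$ has value strictly larger than $c\geq\varphi^k(x^{k+1},z^{k+1})$, this point is a global minimizer of $\varphi^k$ over all of $\Omega^k\times R^m_{++}$, which is the asserted iterate. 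I expect the compactness argument of the previous paragraph, namely trapping $z$ inside $K$ via the barrier behaviour of $H$, to be the crux; once that is in place the conclusion is routine, and note that no convexity or differentiability of $q$ is needed.
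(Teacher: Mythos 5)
Your proposal is correct, and it follows the same overall Weierstrass strategy as the paper, but it handles one point more carefully than the paper does. The paper's proof argues: by (P1), $\varphi^k(x,z)\geq \alpha+\beta^k H(z)+\frac{\mu^k}{2}q^2(x,x^k)$; since $\Omega^k$ is compact (Lemma \ref{Compacto}) and $H$ is coercive (Lemma \ref{Logaritmo}), $\varphi^k$ is coercive on $\Omega^k\times R^m_{++}$; and since $\varphi^k$ is also continuous and proper there, $\operatorname{argmin}$ is nonempty. As stated, that last inference is incomplete: coercivity controls only minimizing sequences with $\|z\|\to\infty$, while the feasible set $\Omega^k\times R^m_{++}$ is not closed, so a minimizing sequence could in principle have some $z_i\to 0^+$ and converge to a point outside the feasible set. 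This is exactly the ``only genuine difficulty'' you identified, and your sublevel-set argument closes it explicitly: writing $H(z)=\sum_{i=1}^m g(z_i/z_i^k)$ with $g(t)=t-\log t-1$, the fact that $g\to\infty$ at \emph{both} ends ($t\to 0^+$ and $t\to\infty$) traps the $z$-component of the sublevel set $S=\{\varphi^k\leq f(x^k,z^k)\}$ in a compact box $K\subset R^m_{++}$ bounded away from the boundary, after which $S\subset\Omega^k\times K$ is compact (closedness of $S$ follows since its limit points stay in $\Omega^k\times K$, where $\varphi^k$ is continuous) and Weierstrass applies. In short, your route uses the same ingredients (compactness of $\Omega^k$, the properties of $H$, continuity of $f$ and of $q^2(\cdot,x^k)$), but it makes rigorous the barrier role of the logarithmic term, which the paper subsumes under the single word ``coercive''; what your version buys is a proof that is watertight on the open constraint $z\in R^m_{++}$, at the modest cost of introducing the explicit scalar analysis of $g$.
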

\begin{proof}
The function $\varphi^k : \Omega^k\times R^m_{++} \rightarrow R$  is coercive. In fact, for (P1) we have: 
\begin{eqnarray}
\varphi^k(x,z) &  =    & f(x,z) + \beta^k \left< \frac{z}{z^k}-{\rm log}\frac{z}{z^k} - e, e \right> + \frac{\mu^k}{2}q^2(x,x^k) \nonumber \\ 
               & \geq  & \alpha + \beta^k (\displaystyle\|\frac{z}{z^k}-{\rm log} \ \frac{z}{z^k}-e\displaystyle\|_1) + \frac{\mu^k}{2}q^2(x,x^k). \label{coerciva}
\end{eqnarray}
Let us define $\|(x,z)\| = \|x\| + \|z\|$ and suppose that $\|(x,z)\|\rightarrow\infty$. This is the same as $ \| x \| \rightarrow\infty$ or $\|z\|\rightarrow\infty$. As $\Omega^k$ is compact (see lemma \ref{prop omega}) and the function $ \displaystyle\|\frac{z}{z^k}-{\rm log} \ \frac{z}{z^k}-e\displaystyle\|_1$ is coercive in $R^m_{++}$ (see lemma \ref{Logaritmo}), it follows from (\ref{coerciva}) that $\varphi^k$ is coercive in $\Omega^k\times R^m_{++}.$\\
The function $\varphi^k : R^n\times R^m_{++} \rightarrow R$ is continuous in $R^n\times R^m_{++}$. In fact, (P2) implies $f$ is continuous in $R^n \times R^m_{++}$. The lemma \ref{Logaritmo} implies $H(z)=\left< \frac{z}{z^k}-{\rm log}\frac{z}{z^k} - e, e \right>$ continuous in $R^m_{++}$. As a consequence of proposition \ref{prop-qd 1}, $q^2(.,x^k):R^n\rightarrow R$ is a continuous application in $R^n$. Threfore, the function $\varphi^k : R^n\times R^m_{++} \rightarrow R \cup \{ + \infty \} $ is continuous in $R^n\times R^m_{++}$.\\
As $\varphi^k : \Omega^k\times R^m_{++} \rightarrow R$ is continuous, coercive and proper in $\Omega^k\times R^m_{++}$, we have that the set argmin$\left\{\varphi^k(x,z) / (x,z)\in\Omega^k\times R^m_{++}\right\}$ is not empty, i.e., for every $k$, there is a solution 
$\left(x^{k+1},z^{k+1}\right)$ to the $(LQDPS)$ problem. 
\end{proof}
\begin{definicao}
Let $C\subset R^n$ be a convex set and $\bar{x}\in C$. The {\it normal cone} (cone of normal directions) at the point $\bar{x}$ related to the set $C$ is given by
\begin{center}
 $N_C(\bar{x})=\{ v \in R^n \ \ / \ \ \left< v , x - \bar{x} \right>   \leq 0 \ \ \forall x\in C \}.$
\end{center}
\label{cone}
\end{definicao}
\begin{corolario}[Characterization] 
\ \\ The solutions $\left(x^{k+1},z^{k+1}\right)$ of the $(LQDPS)$ problem are characterized by:
\begin{enumerate}
\item[{\rm (i)}] {\rm There are } $\xi^{k+1}\in\partial f(.,z^{k+1})(x^{k+1})$, \ \ \ $\zeta^{k+1}\in\partial(q(.,x^k))(x^{k+1})$ \ \ {\rm and} \ \ \\ $v^{k+1}\in N_{\Omega^k}(x^{k+1})$ {\rm such that}
\begin{equation}
 \xi^{k+1} = - \mu^kq(x^{k+1},x^k)\zeta^{k+1} - v^{k+1} \label{caract. em x}
\end{equation}
and
\item[{\rm (ii)}]
\begin{equation}
\frac{1}{z^{k+1}_i}-\frac{1}{z^k_i}=\frac{h_i\left(x^{k+1},z^{k+1}\right)}{\beta^k},
 \ \ \ i=1,\cdots m. \label{caract. em z} 
\end{equation}
\begin{equation*}
x^{k+1}\in \Omega^k, z^{k+1}\in R^m_{++}
\end{equation*}
\end{enumerate}
\label{Caract}
\end{corolario}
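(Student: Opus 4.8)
The plan is to derive conditions (i) and (ii) from the global optimality of $(x^{k+1},z^{k+1})$, exploiting the additive structure of $\varphi^k$. By Proposition \ref{boa definicao}, $(x^{k+1},z^{k+1})$ is a global minimizer of $\varphi^k$ over the product set $\Omega^k\times R^m_{++}$, hence it minimizes in each block separately. Thus $z^{k+1}$ minimizes $z\mapsto f(x^{k+1},z)+\beta^k H(z)$ over the open set $R^m_{++}$, the quasi-distance term being constant in $z$. Since this minimand is differentiable — $f(x^{k+1},\cdot)$ has gradient $h(x^{k+1},\cdot)$ by (P4), and $H$ is smooth on $R^m_{++}$ with $\partial H/\partial z_i=1/z^k_i-1/z_i$ by direct differentiation of $\langle z/z^k-{\rm log}(z/z^k)-e,e\rangle$ — the interior first-order condition $h(x^{k+1},z^{k+1})+\beta^k\nabla H(z^{k+1})=0$ rearranges componentwise into (\ref{caract. em z}), which is (ii). This part is routine; the substance lies in (i).

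For (i) the main obstacle is that $q$ is neither convex nor differentiable, so the optimality condition must be assembled from the nonsmooth calculus of Section \ref{qdst}. Freezing $z=z^{k+1}$, the point $x^{k+1}$ minimizes $x\mapsto f(x,z^{k+1})+\frac{\mu^k}{2}q^2(x,x^k)$ over $\Omega^k$, so by Remark \ref{indicadora}, $0\in\partial\big(f(\cdot,z^{k+1})+\frac{\mu^k}{2}q^2(\cdot,x^k)+\delta_{\Omega^k}\big)(x^{k+1})$. I would then apply the sum rule of Proposition \ref{Prop-Sub.1} twice: $f(\cdot,z^{k+1})$ is convex, hence locally Lipschitz; $\frac{\mu^k}{2}q^2(\cdot,x^k)$ is locally Lipschitz by Proposition \ref{prop-qd 1}; and $\delta_{\Omega^k}$ is proper and lower semicontinuous with finite value at $x^{k+1}\in\Omega^k$. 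Using that $\Omega^k$ is convex and closed (Lemma \ref{prop omega}), so that $\partial\delta_{\Omega^k}(x^{k+1})=N_{\Omega^k}(x^{k+1})$, this yields
\begin{equation*}
0\in\partial f(\cdot,z^{k+1})(x^{k+1})+\partial\Big(\frac{\mu^k}{2}q^2(\cdot,x^k)\Big)(x^{k+1})+N_{\Omega^k}(x^{k+1}).
\end{equation*}

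The decisive step is to evaluate $\partial(q^2(\cdot,x^k))(x^{k+1})$ without differentiability. Here I would invoke the equality-form product rule of Proposition \ref{Prop.Sub.2} with $f_1=f_2=q(\cdot,x^k)$, both nonnegative and Lipschitz around $x^{k+1}$ by Proposition \ref{prop-qd 1}, to obtain
\begin{equation*}
\partial(q^2(\cdot,x^k))(x^{k+1})=\partial\big(2q(x^{k+1},x^k)\,q(\cdot,x^k)\big)(x^{k+1})=2q(x^{k+1},x^k)\,\partial(q(\cdot,x^k))(x^{k+1}),
\end{equation*}
the last equality by positive homogeneity of the limiting subdifferential (and trivially valid when $q(x^{k+1},x^k)=0$). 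Substituting back, there exist $\xi^{k+1}\in\partial f(\cdot,z^{k+1})(x^{k+1})$, $\zeta^{k+1}\in\partial(q(\cdot,x^k))(x^{k+1})$ and $v^{k+1}\in N_{\Omega^k}(x^{k+1})$ with $0=\xi^{k+1}+\mu^k q(x^{k+1},x^k)\zeta^{k+1}+v^{k+1}$, which is exactly (\ref{caract. em x}). The point requiring care throughout is the verification, at each use of Propositions \ref{Prop-Sub.1} and \ref{Prop.Sub.2}, of the Lipschitz and nonnegativity hypotheses — precisely the hypotheses whose role is forced by the absence of convexity and smoothness of $q$, which is why the argument must route through the limiting subdifferential rather than ordinary convex subgradients.
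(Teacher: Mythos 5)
Your proof is correct and follows essentially the same route as the paper's: the limiting-subdifferential optimality condition with the indicator $\delta_{\Omega^k}$ (Remark \ref{indicadora}), the sum rule of Proposition \ref{Prop-Sub.1}, the identification $\partial\delta_{\Omega^k}(x^{k+1})=N_{\Omega^k}(x^{k+1})$ for the closed convex set $\Omega^k$, and the product rule of Proposition \ref{Prop.Sub.2} with $f_1=f_2=q(\cdot,x^k)$; for (ii) you derive the interior first-order condition directly where the paper simply cites Lemma 1 of Greg\'orio--Oliveira, but the content is identical. If anything, your invocation of the sum rule is more accurate than the paper's, since you treat $\delta_{\Omega^k}$ as merely proper and lower semicontinuous (which is all Proposition \ref{Prop-Sub.1} requires), whereas the paper incorrectly asserts that $\delta_{\Omega^k}$ is locally Lipschitz.
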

\begin{proof}

From observation \ref{indicadora} we have
{\footnotesize 
\begin{equation}
0 \in \partial \left( f(.,z^{k+1}) + \beta^k\left < \frac{z^{k+1}}{z^k}-{\rm log}\frac{z^{k+1}}{z^k}-e,e\right> + \frac{\mu^k}{2}q^2(.,x^k) + \delta_{\Omega^k}\right)(x^{k+1}).
\label{otim 1}
\end{equation}} From (P2), $f(.,z^{k+1}) + \beta\left < \frac{z^{k+1}}{z^k}-{\rm log}\frac{z^{k+1}}{z^k}-e,e\right>$
is continuous in $x^{k+1}$; from proposition \ref{prop-qd 1}, $\frac{\mu^k}{2}q^2(.,x^k)$ is locally Lipschitz in $x^{k+1}$;
the convexity of $\Omega^k$ implies the convexity of $\delta_{\Omega^k}$ and therefore that $\delta_{\Omega^k}$ is locally Lipschitz. Therefore, using the proposition \ref{Prop-Sub.1} in (\ref{otim 1}) and remembering that 
\begin{equation*}
\beta\left < \frac{z^{k+1}}{z^k}-{\rm log}\frac{z^{k+1}}{z^k}-e,e\right>  
\end{equation*}
is constant in relation to $\Omega^k$, we obtain
\begin{equation}
0\in \partial \left( f(.,z^{k+1})\right)(x^{k+1}) + \partial \left(\frac{\mu^k}{2}q^2(.,x^k)\right)(x^{k+1}) + \partial \left(\delta_{\Omega^k}\right)(x^{k+1}).
\label{otim 2}
\end{equation}
As $\Omega^k$ is closed and convex, it follows $\partial \left(\delta_{\Omega^k}(.)\right)(x^{k+1}) = N_{\Omega^k}(x^{k+1})$, where $N_{\Omega^k}(x^{k+1})$ denotes the normal cone at the point $x^{k+1}$ in relation to the set $\Omega^k$ (see def. \ref{cone}). From the propositon \ref{prop-qd 1}, $q(.,x^k)$ is Lipschitz continuous in $R^n$. Therefore, taking $f_1 = f_2 = q$ in the proposition \ref{Prop.Sub.2}, we have from (\ref{otim 2}) that
\begin{center}
 $0\in \partial \left( f(.,z^{k+1})\right)(x^{k+1}) + \mu^kq(x^{k+1},x^k)\partial \left(q(.,x^k)\right)(x^{k+1}) + N_{\Omega^k}(x^{k+1})$, 
\end{center}
i.e., there are $\xi^{k+1}\in\partial f(.,z^{k+1})(x^{k+1})$, $\zeta^{k+1}\in \partial (q(.,x^k))(x^{k+1})$ and $v^{k+1} \in N_{\Omega^k}(x^{k+1})$ such that
\begin{equation*}
\xi^{k+1} = - \mu^kq(x^{k+1},x^k)\zeta^{k+1} - v^{k+1}.
\end{equation*}
To end the demonstration, observe (see \cite{Gregorio}, Lemma 1) that
\begin{equation*}
\frac{1}{z^{k+1}_i}-\frac{1}{z^k_i}=\frac{h_i\left(x^{k+1},z^{k+1}\right)}{\beta^k},
 \ \ \ i=1,\cdots m. 
\label{caract z}
\end{equation*}
\begin{equation*}
x^{k+1}\in \Omega^k, z^{k+1}\in R^m_{++}
\end{equation*}

\end{proof}
 
\subsection{STOP CRITERION}

As in Greg\'orio and Oliveira \cite{Gregorio}, we will establish the same stopping rule as was used by Bonnel et al in \cite{Iusem}.

\begin{proposicao}[Stop criterion]
Let $\{(x^k,z^k)\}_{k\in N}$ be the sequence generated by the $(LQDPS)$ method. If $(x^{k+1},z^{k+1})=(x^k,z^k)$ for any integer $k$ 
then $x^k$ is a weak pareto solution for the unconstrained multi-objective optimization problem (\ref{probmultiobjetivo}). 
\label{parada}
\end{proposicao}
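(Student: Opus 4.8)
The plan is to feed the stationarity hypothesis $(x^{k+1},z^{k+1})=(x^k,z^k)$ into the characterization of the iterates given by Corollary \ref{Caract}, collapse the first-order inclusion (\ref{caract. em x}) to a plain convex-minimization condition, and then convert that minimality into the weak pareto property through the strict-scalar-representation property (P3). Part (ii) of the characterization, (\ref{caract. em z}), only forces $h(x^k,z^k)=0$ and is irrelevant to a statement about $x^k$, so the whole argument runs through part (i).

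First I would substitute $x^{k+1}=x^k$ into (\ref{caract. em x}). By property 1 of Definition \ref{defQD} we have $q(x^k,x^k)=0$, so the scalar factor $\mu^k q(x^{k+1},x^k)$ vanishes and (\ref{caract. em x}) reduces to $\xi^{k+1}=-v^{k+1}$, with $\xi^{k+1}\in\partial f(\cdot,z^k)(x^k)$ and $v^{k+1}\in N_{\Omega^k}(x^k)$. Since $f(\cdot,z^k)$ is convex by (P2), its limiting subdifferential coincides with the convex subdifferential, so $\xi^{k+1}$ obeys the subgradient inequality $f(x,z^k)\ge f(x^k,z^k)+\langle\xi^{k+1},x-x^k\rangle$ for every $x\in R^n$.

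Next I would combine this with the defining inequality of the normal cone (Definition \ref{cone}): for every $x\in\Omega^k$ one has $\langle v^{k+1},x-x^k\rangle\le 0$, hence $\langle\xi^{k+1},x-x^k\rangle=-\langle v^{k+1},x-x^k\rangle\ge 0$. Inserting this into the subgradient inequality gives $f(x,z^k)\ge f(x^k,z^k)$ for all $x\in\Omega^k$, i.e. $x^k$ globally minimizes $f(\cdot,z^k)$ over $\Omega^k$. Finally I argue by contradiction: if $x^k$ were not a weak pareto solution of (\ref{probmultiobjetivo}), there would exist $\bar x\in R^n$ with $F_i(\bar x)<F_i(x^k)$ for all $i$; then $F(\bar x)\le F(x^k)$, so $\bar x\in\Omega^k$, and by (P3) the strict componentwise inequality forces $f(\bar x,z^k)<f(x^k,z^k)$, contradicting the minimality just established. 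Hence no such $\bar x$ exists and $x^k$ is a weak pareto solution; note that this argument in fact yields the global weak pareto property directly, consistent with the convex setting.

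The only nonroutine step, and the place I expect to have to be careful, is the passage from the abstract inclusion (\ref{caract. em x}) to genuine global minimality of $f(\cdot,z^k)$ on $\Omega^k$. Everywhere else the quasi-distance has been a source of difficulty precisely because it is neither convex nor differentiable, but here it disappears (the factor $q(x^k,x^k)=0$ kills its subgradient contribution), so the argument leans entirely on convexity of $f(\cdot,z^k)$ and of $\Omega^k$ to upgrade the necessary first-order condition into a sufficient one; this is what makes the subgradient-plus-normal-cone estimate above legitimate.
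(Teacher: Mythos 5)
Your proof is correct, but it takes a genuinely different route from the paper's. The paper never invokes the characterization Corollary \ref{Caract}: it argues directly from the fact that $(x^k,z^k)$ minimizes the subproblem, comparing $\varphi^k(x^k,z^k)$ against points $(x_\lambda,z^k)$ with $x_\lambda=\lambda x^k+(1-\lambda)\bar x\in\Omega^k$, bounding $q^2(x_\lambda,x^k)\le\beta^2(1-\lambda)^2\|\bar x-x^k\|^2$ via (\ref{prop-q}), using convexity of $f$ to get $f(x_\lambda,z^k)\le f(x^k,z^k)-(1-\lambda)\alpha$, and letting $\lambda\to 1^-$ to force $\alpha\le 0$, a contradiction. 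You instead substitute $x^{k+1}=x^k$ into (\ref{caract. em x}), observe that the factor $q(x^k,x^k)=0$ annihilates the quasi-distance subgradient term, and combine the convex subgradient inequality for $f(\cdot,z^k)$ with the normal-cone inequality to conclude that $x^k$ minimizes $f(\cdot,z^k)$ over $\Omega^k$, after which the (P3) contradiction is the same in both proofs. What each buys: the paper's argument is elementary and self-contained — it needs only the definition of the minimizer, (\ref{prop-q}), and convexity, with no subdifferential calculus and no appeal to the identification of the limiting subdifferential of a convex function with its convex subdifferential (a standard fact you use implicitly, and which the paper itself also uses implicitly, but only later, in the proof of Theorem \ref{convergencia}). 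Your argument is shorter once Corollary \ref{Caract} is available, mirrors the structure of the convergence proof, and isolates exactly why the simple stopping rule works for the $q^2$ regularization: the scalar factor $q(x^{k+1},x^k)$ multiplying $\zeta^{k+1}$ vanishes at a fixed point. Notably, this explains why Section \ref{caso-q}, where the regularization is $q$ and the characterization (\ref{caract. em x 1}) carries the factor $\mu^k/2$ instead, must adopt a different stop criterion — your method would break down there, while the paper's elementary argument fails there too (since $q(x_\lambda,x^k)$ is only of order $(1-\lambda)$, not $(1-\lambda)^2$), which is consistent with the paper changing the stopping rule in that case.
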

\begin{proof}
Now, suppose that the stopping rule is satisfied in the $kth$ iteration. By contradiction, admit that $x^k$ is not a weak pareto solution. Then, there is $\bar{x}\in R^n$ such that $F(\bar{x})\ll F(x^k)$. From (P3) we have
\begin{equation*}
f(\bar{x},z^k)<f(x^k,z^k).
\end{equation*}
This implies that there exists $\alpha>0$ such that $f(\bar{x},z^k)=f(x^k,z^k)-\alpha$. Define $x_{\lambda}=\lambda x^k+(1-\lambda)\bar{x}, \lambda\in (0,1)$. Then we have that
\begin{equation*}
(x_\lambda,z^k)=\lambda(x^k,z^k)+(1-\lambda)(\bar{x},z^k).
\end{equation*}
Since $(x^{k + 1},z^{k + 1})$ solves the $(LQDPS)$ problem, $(x^{k+1},z^{k+1})=(x^k,z^k)$, $q^2(x^k,x^k) = 0$ and $x_\lambda\in\Omega^k ,\ \forall \lambda\in (0,1)$, we obtain,
\begin{center}
$ f(x^k,z^k)\leq f(x_\lambda,z^k) + \frac{\mu^k}{2}q^2(x_{\lambda},x^k), \ \forall \ \lambda\in (0,1)$. 
\end{center}
From (\ref{prop-q}), we have,
\begin{equation}
 f(x^k,z^k)\leq f(x_\lambda,z^k) + \frac{\mu^k}{2}\beta^2\|x_{\lambda}-x^k\|^2, \ \forall \ \lambda\in (0,1).
\label{Parada 1}
\end{equation}
As $x_{\lambda} - x^k = (1 - \lambda)(\bar{x} - x^k)$, of (\ref{Parada 1}), then
\begin{equation}
 f(x^k,z^k)\leq f(x_\lambda,z^k) + \frac{\mu^k}{2}\beta^2(1 - \lambda)^2\|\bar{x} - x^k\|^2, \ \forall \ \lambda\in (0,1).
\label{Parada 2}
\end{equation}
On the other hand, the convexity of $f$ implies that
\begin{eqnarray}
 f(x_{\lambda},z^k) & \leq & \lambda f(x^k,z^k) + (1 - \lambda) f(\bar{x},z^k) \nonumber \\
                    & = & \lambda f(x^k,z^k) + (1 - \lambda)(f(x^k,z^k) - \alpha) \nonumber \\
                    & = & f(x^k,z^k) - (1 - \lambda)\alpha. \label{Parada 3} 
\end{eqnarray}
From (\ref{Parada 2}) and (\ref{Parada 3}), $f(x^k,z^k)\leq f(x^k,z^k) - (1 - \lambda)\alpha + \frac{\mu^k}{2}\beta^2(1 - \lambda)^2\|\bar{x} - x^k\|^2 $. So
\begin{center}
 $\alpha \leq (1 - \lambda)\frac{\mu^k}{2}\beta^2\|\bar{x} - x^k\|^2, \ \forall \lambda \in (0,1).$
\end{center}
Hence, $\alpha \leq \displaystyle\lim_{\lambda\rightarrow 1^-}(1 - \lambda)\frac{\mu^k}{2}\beta^2\|\bar{x} - x^k\|^2,$ and therefore, $\alpha\leq 0$, which is a contradiction.
Therefore, $x^k$ is a weak pareto solution for the unconstrained multi-objective optimization problem (\ref{probmultiobjetivo}).
\end{proof}

\subsection{CONVERGENCE}
Based on Fliege and Svaiter \cite{Fliege}, Greg\'orio and Oliveira \cite{Gregorio}, assuming that $\Omega^0$ is limited, established the convergence 
of the log-quadratic proximal scalarization method. In this study, we assume that one of the objective functions is coercive. Consequently, $\Omega^0$ is limited (see the proof of lemma \ref{prop omega}).
\begin{proposicao}
 Let $\{(x^k,z^k)\}_{k\in N}$ be a sequence generated by the {\bf$(LQDPS)$ Method}. Then {\rm (i)} $\{x^k\}_{k\in N}$ is bounded;
  {\rm (ii)} $\{z^k\}_{k\in N}$ is convergent; {\rm (iii)} $\{f(x^k,z^k)\}_{k\in N}$ is noincreasing and convergent.
\label{prop sequencias}
\end{proposicao}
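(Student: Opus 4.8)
The three claims have a natural dependency structure, so I would prove them roughly in the order (i), then (ii), then (iii), though in practice (i) is almost immediate and the real content is in (ii) and (iii). For part (i), the key observation is that the constraint sets are nested: by construction $x^{k+1}\in\Omega^k$ and, since $x^{k+1}\in\Omega^k$ forces $F(x^{k+1})\leq F(x^k)$, we get $\Omega^{k+1}\subseteq\Omega^k\subseteq\cdots\subseteq\Omega^0$. Hence every iterate $x^k$ lies in $\Omega^0$, which is compact by Lemma \ref{prop omega}; boundedness of $\{x^k\}$ follows immediately. This is exactly the role the coercivity-of-one-objective hypothesis was introduced to play.

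For part (ii), the strategy is to extract a scalar recursion for each coordinate of $z^k$ from the characterization \eqref{caract. em z}. That identity reads
\begin{equation*}
\frac{1}{z^{k+1}_i}-\frac{1}{z^k_i}=\frac{h_i(x^{k+1},z^{k+1})}{\beta^k}\geq 0,
\end{equation*}
where the inequality holds because $h_i\geq 0$ by (P4) and $\beta^k>0$. Thus for each fixed $i$ the sequence $\{1/z^k_i\}$ is nondecreasing. Since each $z^k_i>0$, the sequence $\{1/z^k_i\}$ is nonnegative, and a nondecreasing sequence that I can show is bounded above will converge; equivalently, I want to show $\{z^k_i\}$ is nonincreasing and bounded below away from $0$. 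The delicate point here is the upper bound on $1/z^k_i$: I would look to the monotonicity of $\{f(x^k,z^k)\}$ (part (iii)) together with property (P1) (boundedness below of $f$) and the nonnegativity and coercivity of the logarithmic term $H$ to keep the $z$-iterates from degenerating. I expect that summing the Lyapunov-type decrease inequality for $\varphi^k$ over $k$ controls $\sum_k H(z^{k+1})$ and hence pins the sequence $\{z^k\}$ into a compact subset of $R^m_{++}$, giving convergence.

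For part (iii), the natural tool is to compare the objective at consecutive iterates. Since $(x^{k+1},z^{k+1})$ minimizes $\varphi^k$ over $\Omega^k\times R^m_{++}$, and since the feasible point $(x^k,z^k)$ satisfies $x^k\in\Omega^k$, $q^2(x^k,x^k)=0$, and $H(z^k)=0$ (the logarithmic term vanishes at $z=z^k$ by construction), I would evaluate the optimality inequality $\varphi^k(x^{k+1},z^{k+1})\leq\varphi^k(x^k,z^k)$. The right-hand side collapses to $f(x^k,z^k)$, while on the left I can drop the nonnegative terms $\beta^k H(z^{k+1})\geq 0$ and $\tfrac{\mu^k}{2}q^2(x^{k+1},x^k)\geq 0$ to obtain $f(x^{k+1},z^{k+1})\leq f(x^k,z^k)$. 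This gives monotonicity directly; combined with (P1), which bounds $\{f(x^k,z^k)\}$ below, a nonincreasing sequence bounded below converges, establishing (iii). I would state and prove (iii)'s monotonicity early, since the decrease inequality it produces is the same inequality I need to bound $\sum_k\beta^k H(z^{k+1})$ and $\sum_k q^2(x^{k+1},x^k)$ for part (ii).

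The main obstacle I anticipate is the convergence (not merely monotonicity) in part (ii): showing $1/z^k_i$ stays bounded above, equivalently that $z^k_i$ does not collapse to $0$. Everything else is a clean consequence of nestedness, compactness, (P1), and the minimization inequality, but ruling out degeneration of the $z$-iterates is where the interplay between the vanishing of $H$ at $z^k$, the telescoping of the decrease inequality, and the coercivity of $H$ (Lemma \ref{Logaritmo}) must be used carefully.
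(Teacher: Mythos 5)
Your parts (i) and (iii) match the paper's proof: nestedness of the sets $\Omega^k$ together with compactness of $\Omega^0$ (Lemma \ref{prop omega}) gives (i), and evaluating the optimality inequality $\varphi^k(x^{k+1},z^{k+1})\leq\varphi^k(x^k,z^k)$ at the feasible point $(x^k,z^k)$, where both regularization terms vanish, then dropping the nonnegative terms on the left, gives (iii). The gap is in (ii). You correctly extract from (\ref{caract. em z}) that $\{1/z^k_i\}$ is nondecreasing, i.e.\ that $\{z^k_i\}$ is positive and nonincreasing, but you then declare that the ``delicate point'' is to bound $1/z^k_i$ above, that is, to keep $z^k_i$ away from $0$. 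That step is both unnecessary and, in general, impossible. Unnecessary: a nonincreasing sequence bounded below by $0$ converges; its limit is merely some point of $R^m_+$, which is all the statement (and its later use in Theorem \ref{convergencia}, where only $z^*\in R^m_+$ is invoked) requires. This monotone-plus-bounded-below observation is essentially the paper's entire proof of (ii). Impossible: take $f$ as in Proposition \ref{exemplo p5}, so that $h_i\equiv 1$, and take $\beta^k=1$; then (\ref{caract. em z}) gives $1/z^{k+1}_i=1/z^k_i+1$, hence $z^k_i\to 0$. The degeneration you are trying to rule out actually occurs for perfectly legitimate instances of the method.

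Moreover, the mechanism you propose for ruling it out could not work. The logarithmic term in $\varphi^k$ is $H_k(z)=\langle z/z^k-\log(z/z^k)-e,e\rangle$, which measures divergence of $z$ from the \emph{previous iterate} $z^k$, not from any fixed point; telescoping the decrease inequality only bounds $\sum_k\beta^k H_k(z^{k+1})$, and since $\beta^k$ is assumed positive but not bounded away from $0$, this controls neither a fixed compact subset of $R^m_{++}$ containing the iterates nor even the Cauchy property of $\{z^k\}$. So part (ii) should be closed by the elementary monotonicity argument above, not by a compactness or summability argument.
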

\begin{proof}
{\bf (i)} Since $\Omega^k\supseteq\Omega^{k+1}, k = 0,1,...,$, we have $x^k\in\Omega^{k-1}\subseteq\Omega^0 \ \ \forall k\geq 1$.
As $\Omega^0$ is limited, it follows that  $\{x^k\}$ is limited.\\
{\bf (ii)}  Since $h_i(x,z)\geq 0$, $\beta^k > 0$ and $\{z^k_i\}_{k\in N}$ is bounded below, Equation (\ref{caract. em z}) implies $\{z^k\}_{k\in N}$ is convergent (see \cite{Gregorio}, proof of theorem 1).\\
{\bf (iii)} $\varphi^k(x^{k+1},z^{k+1})\leq\varphi^k(x^k,z^k), \forall k \in N$, i.e, for every $k\in N$,
{\small
\begin{equation}
 f(x^{k+1},z^{k+1}) + \beta^k\left < \frac{z^{k+1}}{z^k}-{\rm log}\frac{z^{k+1}}{z^k}-e,e\right> + \frac{\mu^k}{2}q^2(x^{k+1},x^k)\leq f(x^k,z^k).
\label{conv}
\end{equation}}
As $\beta^k\left < \frac{z^{k+1}}{z^k}-{\rm log}\frac{z^{k+1}}{z^k}-e,e\right> + \frac{\mu^k}{2}q^2(x^{k+1},x^k) \geq 0 \ \forall k \in N$, 
we have,
\begin{center}
$f(x^{k+1},z^{k+1}) \leq f(x^k,z^k) \ \forall k \in N$, 
\end{center}
i.e., $\{f(x^k,z^k)\}_{k\in N}$ is a noincreasing sequence. For (P1), $\{f(x^k,z^k)\}$ is bounded below, and therefore convergent. 
\end{proof}
\begin{proposicao}
Let $\{x^k\}_{k\in N}$ be a sequence generated by {\bf $(LQDPS)$} Method. Then
\begin{enumerate}
 \item  [{\rm (i)}] $\displaystyle\sum_{k = 0}^{\infty}q^2(x^{k + 1},x^k) < \infty$. In particular $\displaystyle\lim_{k\rightarrow\infty}q^2(x^{k + 1},x^k) = 0$.
 \item [{\rm (ii)}] $\displaystyle\lim_{k\rightarrow\infty}\|x^k - x^{k+1}\| = 0$.
\end{enumerate}
\label{prop serie}
\end{proposicao}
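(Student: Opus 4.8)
The plan is to extract a telescoping inequality from the descent relation (\ref{conv}) already established in the proof of Proposition \ref{prop sequencias}(iii), namely
\begin{equation*}
f(x^{k+1},z^{k+1}) + \beta^k\left< \frac{z^{k+1}}{z^k}-{\rm log}\frac{z^{k+1}}{z^k}-e,e\right> + \frac{\mu^k}{2}q^2(x^{k+1},x^k)\leq f(x^k,z^k).
\end{equation*}
By Lemma \ref{Logaritmo} the logarithmic term is non-negative, so it may be discarded to leave
\begin{equation*}
\frac{\mu^k}{2}q^2(x^{k+1},x^k)\leq f(x^k,z^k) - f(x^{k+1},z^{k+1}), \qquad \forall k\in N.
\end{equation*}

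Next I would use the uniform lower bound $\mu^k > l > 0$ on the step parameters to obtain $\frac{l}{2}q^2(x^{k+1},x^k)\leq f(x^k,z^k) - f(x^{k+1},z^{k+1})$, and then sum this over $k=0,\dots,N$. The right-hand side telescopes to $f(x^0,z^0) - f(x^{N+1},z^{N+1})$, and since property (P1) guarantees that $f$ is bounded below, the partial sums $\frac{l}{2}\sum_{k=0}^{N}q^2(x^{k+1},x^k)$ stay bounded above by a constant independent of $N$. Letting $N\to\infty$ yields $\sum_{k=0}^{\infty}q^2(x^{k+1},x^k)<\infty$, which is part (i); convergence of the series forces its general term $q^2(x^{k+1},x^k)$ to $0$.

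For part (ii), I would invoke the left-hand inequality of the hypothesis (\ref{prop-q}): there is a positive constant for which $\|x^{k+1}-x^k\|$ is dominated by a fixed multiple of $q(x^{k+1},x^k)$. Squaring and using part (i) gives $\|x^{k+1}-x^k\|^2 \to 0$, hence $\|x^{k+1}-x^k\|\to 0$.

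There is no serious obstacle here; the argument is a standard summable-decrease estimate. The only points requiring care are that both non-negative regularizers must be legitimately dropped (for which the non-negativity supplied by Lemma \ref{Logaritmo} is essential) and that the lower bound $l$ on $\mu^k$ — rather than $\mu^k$ itself, which could in principle tend to $0$ — is what keeps the coefficients of $q^2$ from degenerating, so that summability of $\frac{\mu^k}{2}q^2(x^{k+1},x^k)$ actually transfers to summability of $q^2(x^{k+1},x^k)$.
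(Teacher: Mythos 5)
Your proposal is correct and follows essentially the same route as the paper: both drop the non-negative logarithmic term from (\ref{conv}) via Lemma \ref{Logaritmo}, use the uniform bound $\mu^k > l > 0$ to telescope the resulting inequality (the paper invokes the convergence of $\{f(x^k,z^k)\}$ from Proposition \ref{prop sequencias}(iii), which is just (P1) plus monotonicity, exactly as in your argument), and then deduce (ii) from the left-hand inequality $\alpha\|x^k - x^{k+1}\| \leq q(x^{k+1},x^k)$ of (\ref{prop-q}). No gaps; this is the paper's proof in all but cosmetic detail.
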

\begin{proof}
 {\bf (i)} As $\beta^k\left < \frac{z^{k+1}}{z^k}-{\rm log}\frac{z^{k+1}}{z^k}-e,e\right> \geq 0$, of (\ref{conv}) we have:
\begin{center}
$f(x^{k+1},z^{k+1}) + \frac{\mu^k}{2}q^2(x^{k+1},x^k)\leq f(x^k,z^k), \ \forall k\in N.$  
\end{center}
\begin{eqnarray}
{\rm Hence,} \ \ \ \ \ \ \ \ \ \ \ \ \ \ \  q^2(x^{k+1},x^k)  & \leq & \frac{2}{\mu^k}\left( f(x^k,z^k) - f(x^{k+1},z^{k+1}) \right), \forall k\in N \nonumber \\
                                                             & \leq & \frac{2}{l}\left( f(x^k,z^k) - f(x^{k+1},z^{k+1}) \right), \forall k\in N. \nonumber 
\end{eqnarray}
Therefore, as long as $\{f(x^k,z^k)\}_{k\in N}$ is noincreasing and convergent, 
\begin{center}
$\displaystyle\sum_{k = 0}^{n}q^2(x^{k + 1},x^k) \leq \frac{2}{l}\left( f(x^0,z^0) - \displaystyle\lim_{k\rightarrow\infty}f(x^{k+1},z^{k+1}) \right) < \infty \ \ \forall n\in N$
\end{center}
{\bf (ii)} (\ref{prop-q}) implies $\alpha^2 \|x^k - x^{k+1}\|^2\leq q^2(x^{k + 1},x^k), \ \forall \ k\in N.$ So from {\bf (i)}, $\displaystyle\lim_{k\rightarrow\infty}\|x^k - x^{k+1}\| = 0$. 
\end{proof}
\begin{proposicao}
 If  $\{x^k\}_{k\in N}$ is bounded, then the set $\partial\left( q(.,x^k)\right)(x^{k+1})$ is bounded for every $k\in N.$
\label{Limitacao}
\end{proposicao}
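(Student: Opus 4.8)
The plan is to prove the assertion by bounding every limiting subgradient in $\partial\left(q(.,x^k)\right)(x^{k+1})$ in terms of the single constant $\beta$ of (\ref{prop-q}), which does not depend on $k$. The decisive point is that the Lipschitz modulus of the map $q(.,x^k)$ is the same for every $k$, so the conclusion follows uniformly and, a fortiori, for each fixed $k$ as stated. The boundedness hypothesis on $\{x^k\}$ serves to keep all evaluation points $x^{k+1}$ and all second arguments $x^k$ inside one bounded region, so that the local-boundedness machinery of Propositions \ref{prop Sub 1} and \ref{prop Sub 2} can be applied to the whole family at once.

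First I would record the uniform Lipschitz estimate. Fix $k$ and take $x,y\in R^n$. By the triangle inequality (item 2 of Definition \ref{defQD}) together with the right-hand bound in (\ref{prop-q}),
\begin{equation*}
q(x,x^k)-q(y,x^k)\leq q(x,y)\leq \beta\|x-y\|,
\end{equation*}
and interchanging $x$ and $y$ gives $q(y,x^k)-q(x,x^k)\leq \beta\|x-y\|$. Hence $|q(x,x^k)-q(y,x^k)|\leq\beta\|x-y\|$, so $q(.,x^k)$ is globally Lipschitz with the fixed constant $\beta$ for every $k$ (in accordance with Proposition \ref{prop-qd 1}).

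Next I would convert the Lipschitz property into boundedness of the subdifferential. For fixed $k$, the function $q(.,x^k)$ is locally Lipschitz, hence locally lower semi-continuous and finite, at $x^{k+1}$; Proposition \ref{prop Sub 2} then yields that $\partial\left(q(.,x^k)\right)(x^{k+1})$ is nonempty and compact, hence bounded, which already establishes the statement as literally phrased. Since $\{x^k\}$ is bounded, the sequences $\{x^{k+1}\}$ and $\{x^k\}$ lie in a common bounded set $B$, so the locally bounded map $x\mapsto\partial\left(q(.,x^k)\right)(x)$ sends $B$ to a bounded set by Proposition \ref{prop Sub 1}; this is the place where the hypothesis on $\{x^k\}$ is used.

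The main obstacle is that Propositions \ref{prop Sub 1}--\ref{prop Sub 2} a priori produce a bound depending on the particular function $q(.,x^k)$, hence possibly on $k$, whereas the intended (and downstream-useful) conclusion is a bound uniform in $k$. This is resolved precisely by the $k$-independence of the modulus $\beta$ from the first step: for a $\beta$-Lipschitz function, every Fr\'echet subgradient $\zeta\in\hat\partial\left(q(.,x^k)\right)(x^{k+1})$ satisfies $\|\zeta\|\leq\beta$, as one reads off from the $\liminf$ in the definition of $\hat\partial$ by testing along unit directions $d$ and using $q(x^{k+1}+td,x^k)-q(x^{k+1},x^k)\leq\beta t$; limiting subgradients, being limits of Fr\'echet subgradients, inherit the same bound. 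Therefore $\partial\left(q(.,x^k)\right)(x^{k+1})\subseteq\{\zeta\in R^n:\|\zeta\|\leq\beta\}$ for every $k$, giving the (uniform) boundedness. I expect the only delicate points to be the verification that the Lipschitz constant is genuinely free of $k$ and the passage from the modulus to the explicit subgradient bound; the remaining steps are direct appeals to the preliminary results.
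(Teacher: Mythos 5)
Your proof is correct, but it takes a genuinely more self-contained route than the paper. The paper's entire proof is a citation: it invokes Propositions \ref{prop Sub 1} and \ref{prop Sub 2} (following Lemma 5.1 of Moreno et al. \cite{Moreno}); that is, since $q(.,x^k)$ is Lipschitz, its limiting subdifferential mapping is locally bounded, and the boundedness of $\{x^k\}$, hence of $\{x^{k+1}\}$, confines all evaluation points to one bounded set, on which a locally bounded mapping carries bounded sets to bounded sets. Your argument reaches the same conclusion, and in fact a stronger one, by direct computation: from the triangle inequality and (\ref{prop-q}) you extract the $k$-independent global Lipschitz modulus $\beta$ for every $q(.,x^k)$, and then bound every Fr\'echet subgradient (hence every limiting subgradient, each being a limit of Fr\'echet subgradients by definition) by $\beta$ straight from the $\liminf$ definition. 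This buys two things the paper's one-line proof does not make explicit: an explicit uniform constant, $\|\zeta^{k+1}\|\leq\beta$ for all $k$, which is precisely the form in which the proposition is used in Theorem \ref{convergencia} (there one needs $\|\zeta^{k_j+1}\|\leq M$ with $M$ independent of $j$); and the observation that the boundedness hypothesis on $\{x^k\}$ is actually superfluous for this conclusion, since the modulus $\beta$ is global. The only caveat is a mild redundancy in your middle paragraph: the appeal to Proposition \ref{prop Sub 1} over a common bounded set $B$ still produces, a priori, a bound depending on $k$, because the function $q(.,x^k)$ itself changes with $k$, so that step alone does not deliver uniformity; but you identify this obstacle yourself, and your final $\beta$-bound resolves it cleanly.
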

\begin{proof}
 It follows from the propositions \ref{prop Sub 1} and \ref{prop Sub 2}, see \cite{Moreno} Lemma 5.1.
\end{proof}

Now, we can prove the convergence of our method if the stopping rule never applies.
\begin{teorema}[convergence]
Let $F:R^n\longrightarrow R^m$ be a convex map such that $\lim_{\|x\|\rightarrow\infty} F_r(x) = \infty$ for some $r\in \{ 1,...,m \}$, $f:R^n\times R^m_+\longrightarrow R$ be a function satisfying the properties (P1) to (P4), and $q: R^n\times R^n\rightarrow R_+$ be a quasi-distance function that satisfies (\ref{prop-q}) . If $\left\{\mu^k\right\}_{k\in N}$ and $\left\{\beta^k\right\}_{k\in N}$ are sequences of real positive numbers, with $ 0 < l < \mu^k < L, \forall \ k\in N$, then the sequence $\{(x^k,z^k)\}_{k\in N}$ generated by the logarithmic quasi-distance proximal point scalarization method is bounded and each cluster point of $\{x^k\}_{k\in N}$ is a weak pareto solution for the unconstrained multi-objective optimization
problem (\ref{probmultiobjetivo}). \label{convergencia}
\end{teorema}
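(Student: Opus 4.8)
The plan is to obtain boundedness directly from the facts already assembled, and then to identify each cluster point of $\{x^k\}$ with a minimizer of the convex scalarization $f(\cdot,z^*)$, from which the weak Pareto property follows. Boundedness is immediate: Proposition \ref{prop sequencias} gives that $\{x^k\}_{k\in N}$ is bounded (it lies in the compact set $\Omega^0$, by Lemma \ref{prop omega}) and that $\{z^k\}_{k\in N}$ converges, hence is bounded, so $\{(x^k,z^k)\}_{k\in N}$ is bounded. I would then fix a cluster point $x^*$ of $\{x^k\}$ and a subsequence $x^{k_j}\to x^*$, and write $z^*=\lim_k z^k$. First I would upgrade the convergence to shifted indices: since Proposition \ref{prop serie}(ii) gives $\|x^{k+1}-x^k\|\to 0$, one has $x^{k_j+1}\to x^*$, and $z^{k_j+1}\to z^*$ because the whole sequence $\{z^k\}$ converges. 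I would also record that $x^*\in\bigcap_{k}\Omega^k$: for each fixed $k$ and all large $j$ one has $x^{k_j+1}\in\Omega^{k_j}\subseteq\Omega^k$, and $\Omega^k$ is closed by Lemma \ref{prop omega}, so the limit $x^*$ lies in $\Omega^k$; in particular $F(x^*)\leq F(x^k)$ for every $k$.

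The argument then proceeds by contradiction, in the spirit of Proposition \ref{parada}. Suppose $x^*$ is not a weak Pareto solution; then there is $\bar x\in R^n$ with $F(\bar x)\ll F(x^*)$. Combined with $F(x^*)\leq F(x^k)$ this yields $F(\bar x)\leq F(x^k)$, i.e. $\bar x\in\Omega^k$ for all $k$, and by (P3) it yields $f(\bar x,z^*)<f(x^*,z^*)$; set $\alpha=f(x^*,z^*)-f(\bar x,z^*)>0$. Now I use Corollary \ref{Caract}(i): there are $\xi^{k+1}\in\partial f(\cdot,z^{k+1})(x^{k+1})$, $\zeta^{k+1}\in\partial(q(\cdot,x^k))(x^{k+1})$ and $v^{k+1}\in N_{\Omega^k}(x^{k+1})$ with $\xi^{k+1}=-\mu^k q(x^{k+1},x^k)\zeta^{k+1}-v^{k+1}$. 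Since $f(\cdot,z^{k+1})$ is convex by (P2), the limiting subdifferential coincides with the convex subdifferential, so the subgradient inequality holds; evaluating it at $\bar x$, substituting $\xi^{k+1}$, and discarding the term $-\langle v^{k+1},\bar x-x^{k+1}\rangle\geq 0$ (nonnegative because $\bar x\in\Omega^k$ and $v^{k+1}\in N_{\Omega^k}(x^{k+1})$, see Definition \ref{cone}), gives
\[
f(x^{k+1},z^{k+1})-f(\bar x,z^{k+1})\leq \mu^k\,q(x^{k+1},x^k)\,\langle \zeta^{k+1},\bar x-x^{k+1}\rangle .
\]

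Finally I would pass to the limit along $k_j$. On the left, continuity of the finite convex function $f$ gives $f(x^{k_j+1},z^{k_j+1})-f(\bar x,z^{k_j+1})\to f(x^*,z^*)-f(\bar x,z^*)=\alpha$. On the right, $\mu^k<L$ is bounded, $q(x^{k_j+1},x^{k_j})\to 0$ by Proposition \ref{prop serie}(i), and the factor $\langle\zeta^{k_j+1},\bar x-x^{k_j+1}\rangle$ is bounded because $\{\zeta^{k+1}\}$ is bounded by Proposition \ref{Limitacao} (here the boundedness of $\{x^k\}$ is used) and $\{x^{k_j+1}\}$ converges; hence the right-hand side tends to $0$. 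Therefore $\alpha\leq 0$, contradicting $\alpha>0$, so $x^*$ is a weak Pareto solution. I expect the delicate step to be exactly this simultaneous control of the two nonsmooth terms: the normal-cone contribution must be eliminated through the feasibility $\bar x\in\Omega^k$, while the quasi-distance subgradient term is annihilated by the product $q(x^{k+1},x^k)\zeta^{k+1}$ of a vanishing scalar and a uniformly bounded vector. It is precisely to make this last estimate available that the coercivity of one $F_r$ (which forces $\Omega^0$ compact and $\{x^k\}$ bounded) and the two-sided bound (\ref{prop-q}) are invoked.
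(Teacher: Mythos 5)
Your proof is correct and follows essentially the same route as the paper: both use Corollary \ref{Caract}(i) with the convex subgradient inequality for $f(\cdot,z^{k+1})$, eliminate the normal-cone term through feasibility of the comparison point in $\Omega^k$, annihilate the term $\mu^k q(x^{k+1},x^k)\langle\zeta^{k+1},\cdot\rangle$ via Propositions \ref{prop serie} and \ref{Limitacao}, and conclude with (P3) by contradiction. The only difference is organizational --- the paper first establishes $f(x,z^*)\geq f(x^*,z^*)$ for all $x\in\Omega=\bigcap_k\Omega^k$ and then derives the contradiction, whereas you fix $\bar{x}$ first and pass to the limit in the inequality evaluated at $\bar{x}$ --- which is mathematically the same argument.
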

\begin{proof}
From Proposition \ref{prop sequencias}, there are $x^*\in R^n$, $z^*\in R^m_+$ and $\{x^{k_j}\}_{j\in N}$, a subsequence of $\{x^k\}_{k\in N}$, such that $\displaystyle\lim_{j\rightarrow\infty}x^{k_j} =x^*$ and $\displaystyle\lim_{k\rightarrow\infty}z^{k} =z^*$. From (P2) and (P4) $f$ is continuous in $R^n\times R^m_+$, so $f(x^*,z^*) = \displaystyle\lim_{k\rightarrow\infty}f(x^{k_j},z^{k_j}) =  \displaystyle\inf_{k\in N}\{ f(x^k,z^k)\}$. 
From corollary \ref{Caract}(i), there are $\zeta^{k+1}\in\partial(q(.,x^k))(x^{k+1})$ and $v^{k+1}\in N_{\Omega^k}(x^{k+1})$ such that 
\begin{center}
$- \mu^kq(x^{k+1},x^k)\zeta^{k+1} - v^{k+1} \in \partial f(.,z^{k+1})(x^{k+1})$. 
\end{center}
Hence, from subgradient inequality for the convex function $f(.,z^{k+1})$ we have: $\forall x\in R^n$,
{\small
\begin{eqnarray}
 f(x,z^{k_j+1}) & \geq & f(x^{k_j + 1},z^{k_j + 1}) - \mu^{k_j}q(x^{k_j + 1},x^{k_j})<\zeta^{k_j + 1},x - x^{k_j + 1}> \nonumber \\
                &  -   & < v^{k_j+1},x - x^{k_j+1}> \label{des subgrad} 
\end{eqnarray}}As $v^{k_j+1}\in N_{\Omega^{k_j}}(x^{k_j+1})$ we have $ - < v^{k_j+1},x - x^{k_j+1}> \ \geq  0 \ \ \forall x\in\Omega^{k_j}$ (See definition \ref{cone}). 
By remark \ref{inters omegas}, $\Omega = \bigcap\limits_{k=0}^{\infty} \Omega^k \neq \emptyset$. Therefore, in particular, from (\ref{des subgrad}): $\forall x\in \Omega$,
{\small
\begin{equation}
f(x,z^{k_j+1}) \geq  f(x^{k_j + 1},z^{k_j + 1}) - \mu^{k_j}q(x^{k_j + 1},x^{k_j})<\zeta^{k_j + 1},x - x^{k_j + 1}>
\label{des Omega}
\end{equation}}
From Propositions \ref{prop serie} and \ref{Limitacao}, $\displaystyle\lim_{k\rightarrow\infty}\|x^{k_j} - x^{k_j + 1}\| = 0$ 
and $\|\zeta^{k_j + 1}\|\leq M$ respectively. As $ 0 < l < \mu^k < L, \forall \ k\in N $,
using (\ref{prop-q}) and inequality of Cauchy-Swartz, $\mid \mu^{k_j}q(x^{k_j + 1},x^{k_j})<\zeta^{k_j + 1},x - x^{k_j + 1}> \mid \rightarrow 0$ when $j\rightarrow \infty$. 
Therefore from (\ref{des Omega}), 
\begin{equation}
f(x,z^*) \geq f(x^*,z^*), \ \forall \ x\in \Omega. 
\label{minimo omega}
\end{equation} 
We will now show that $x^*\in {\rm argmin}_w\{F(x) / x\in R^n\}$. Suppose, by contradiction, that there is $\bar{x}\in R^n$ such that $F(\bar{x})\ll F(x^*)$. As 
$z^*\in R^m_+$, for (P3), 
\begin{equation}
f(\bar{x},z^*) < f(x^*,z^*).
\label{x barra}
\end{equation}
As $\Omega^{k+1}\subseteq \Omega^k, \ \forall k\geq 0$ and $x^{k_j}\in\Omega^{k_j - 1}, \ \forall j$ with $x^{k_j}\rightarrow x^*; \ j\rightarrow\infty$, then $x^*\in \Omega$, i.e., $F(x^*)\leq F(x^k), \ \forall k\in N$. Hence, $F(\bar{x}) \ll F(x^k), \ \forall k\in N$, i.e, $\bar{x}\in\Omega$, which contradicts (\ref{minimo omega}) and (\ref{x barra}).
\end{proof}
\section{Regularization with the quasi-distance, q}
\label{caso-q}
In this section it will be shown that, using the $(LQDPS)$ method with the quasi-distance $q$ as regularization in place of $q^2$, the result as to the existence of the iterations remains valid and the sequence generated by the algorithm is limited.
To ensure convergence, we assume that the sequence of parameters $\{\mu^k\} $ satisfies: $\mu^k > 0 \ \forall k = 1,2,...$ and $\mu^k \rightarrow 0$ when $k\rightarrow\infty$, thus confirming the importance of regularization with the quasi-distance $q^2$. 
A different stop criterion will be used.\\
\ \\
{\bf Well-posedness:} Let $\bar{x}\in R^n$ fixed. As $q(.,\bar{x}):R^n\rightarrow R$ is continuous and coercive (see Propositions \ref{prop-qd 1} and \ref{prop-qd 2}), the proposition \ref{boa definicao} is still valid if we replace $q^2$ by $q$ in (\ref{exponencial}). \\
\ \\
{\bf Characterization:} The item (i) in the corollary \ref{Caract} will be replaced by:
\begin{enumerate}
\item[{\rm (i-1)}] {\rm There are} $\xi^{k+1}\in\partial f(.,z^{k+1})(x^{k+1})$, \ \ \ $\zeta^{k+1}\in\partial(q(.,x^k))(x^{k+1})$ \ \ {\rm and} \ \ \\ $v^{k+1}\in N_{\Omega^k}(x^{k+1})$ {\rm such that}
\begin{equation}
 \xi^{k+1} = - \frac{\mu^k}{2}\zeta^{k+1} - v^{k+1}. \label{caract. em x 1}
\end{equation}
\end{enumerate}
{\bf Proof:} As, from Proposition \ref{prop-qd 1}, $\frac{\mu^k}{2}q(.,x^k)$ is Lipschitz continuous in $x^{k+1}$, the proof of (i-1) is similar to the proof of
(i) in the corollary \ref{Caract}.\\ 
The item (ii) in the corollary \ref{Caract} will be the same.\\
\ \\
{\bf Stop Criterion:} We will use the following stop criterion:
\begin{proposicao}[Stop Criterion - Case q]
Let $\{(x^k,z^k)\}_{k\in N}$ be the sequence generated by the $(LQDPS)$ method with $q$ instead of $q^2$. If there is $\tilde{z}\in R^m_+$ such that 
\begin{center}
 $x^{k+1}\in {\rm argmin}\{f(.,\tilde{z}); \ \ x\in \Omega^k\}$ for any integer $k \geq 0$,
\end{center}
then $x^{k+1}$ is a weak pareto solution for the unconstrained multi-objective optimization problem (\ref{probmultiobjetivo}). 
\end{proposicao}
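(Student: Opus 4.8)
The plan is to argue by contradiction, in the same spirit as the proof of Proposition \ref{parada}, but using only the hypothesis on $x^{k+1}$ as a constrained minimizer of $f(.,\tilde{z})$, without any regularization term or limiting argument in $\lambda$. Suppose $x^{k+1}$ is \emph{not} a weak pareto solution of (\ref{probmultiobjetivo}). Then, by definition, there exists $\bar{x}\in R^n$ with $F(\bar{x})\ll F(x^{k+1})$, i.e., $F_i(\bar{x}) < F_i(x^{k+1})$ for every $i = 1,\dots,m$. The goal is to show this $\bar{x}$ strictly lowers $f(.,\tilde{z})$ while remaining feasible for the subproblem, contradicting optimality of $x^{k+1}$.

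The first step is to verify feasibility, namely $\bar{x}\in\Omega^k$. Since $(x^{k+1},z^{k+1})$ is generated by the method, we have $x^{k+1}\in\Omega^k$, which by the definition $\Omega^k=\{x\in R^n\mid F(x)\leq F(x^k)\}$ means $F(x^{k+1})\leq F(x^k)$. Combining this with $F(\bar{x})\ll F(x^{k+1})$, and using that strict componentwise domination $\ll$ implies the weak domination $\leq$ together with transitivity of the componentwise order, we obtain $F(\bar{x})\leq F(x^k)$, hence $\bar{x}\in\Omega^k$. The second step invokes property (P3): because $\tilde{z}\in R^m_+$ and $F_i(\bar{x}) < F_i(x^{k+1})$ for all $i$, the strict scalar representation property gives $f(\bar{x},\tilde{z}) < f(x^{k+1},\tilde{z})$. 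On the other hand, the hypothesis $x^{k+1}\in{\rm argmin}\{f(.,\tilde{z}); \ x\in\Omega^k\}$ together with $\bar{x}\in\Omega^k$ yields $f(x^{k+1},\tilde{z})\leq f(\bar{x},\tilde{z})$. These two inequalities are incompatible, which is the desired contradiction; therefore $x^{k+1}$ is a weak pareto solution.

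I expect essentially no technical obstacle in this argument, since the hypothesis already furnishes $x^{k+1}$ as a genuine constrained minimizer of $f(.,\tilde{z})$ over $\Omega^k$, so the delicate $\lambda\to 1^-$ passage and the appeal to (\ref{prop-q}) used in Proposition \ref{parada} are both avoided. The only point deserving care is the feasibility claim $\bar{x}\in\Omega^k$: it is precisely the observation that any candidate $\bar{x}$ strictly dominating $x^{k+1}$ is automatically admissible for the $k$th subproblem that lets constrained minimality over $\Omega^k$ serve as a global weak-Pareto certificate, in the same manner that Proposition \ref{escalarizacao2} converts minimizers of a weak scalar representation into weak pareto points.
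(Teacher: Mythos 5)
Your proposal is correct and follows essentially the same argument as the paper: assume a strictly dominating point $\bar{x}$ exists, show $\bar{x}\in\Omega^k$, apply (P3) to get $f(\bar{x},\tilde{z}) < f(x^{k+1},\tilde{z})$, and contradict the minimality hypothesis. The only cosmetic difference is that you justify $\bar{x}\in\Omega^k$ by transitivity of the componentwise order ($F(\bar{x})\ll F(x^{k+1})\leq F(x^k)$), while the paper phrases the same fact as the nesting $\Omega^{k+1}\subseteq\Omega^k$; these are identical reasonings.
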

\begin{proof}
As $x^{k+1}\in {\rm argmin}\{f(.,\tilde{z}); \ \ x\in \Omega^k\}$, then 
\begin{equation}
f(x^{k+1},\tilde{z})\leq f(x,\tilde{z}) \ \forall x\in \Omega^k. 
\label{min omegak}
\end{equation}
Suppose, for contradiction, that there is $\bar{x}\in R^n$ such that $F(\bar{x}) \ll F(x^{k+1})$. Then, $\bar{x}\in\Omega^k$ ($\Omega^{k+1} \subseteq \Omega^k \ \forall k\geq 0$ ) and for (P3),
$f(\bar{x},\tilde{z}) < f(x^{k+1},\tilde{z})$, which contradicts (\ref{min omegak}). So $x^{k+1}\in {\rm argmin}_w \{ F(x) / x\in R^n \}$.
\end{proof} \\
{\bf Convergence:} Let $\{(x^k,z^k)\}_{k\in N}$ be a sequence generated by {\bf $(LQDPS)$ Method} with $q$ instead of $q^2$. 
Proposition \ref{prop sequencias} is still valid if we replace $q^2$ by $q$ in (\ref{exponencial}). In fact, clearly Proposition \ref{prop sequencias}(i) and Proposition \ref{prop sequencias}(ii) are still valid. As $q(x^k,x^k) = 0$ and $q(x,y)\geq 0 \ \forall (x,y)\in R^n \times R^n$, if we replace $q^2$ by $q$ in (\ref{exponencial}) the proof of (iii) is similar to the case $q^2$. 
Then {\bf (i)'} $\{x^k\}_{k\in N}$ is bounded; {\bf (ii)'} $\{z^k\}_{k\in N}$ is convergent; {\bf (iii)'} $\{f(x^k,z^k)\}_{k\in N}$ is non-increasing and convergent.\\
Suppose that the sequence of parameters $\{\mu^k\} $ satisfies: $\mu^k > 0 \ \forall k = 1,2,...$ and $\mu^k \rightarrow 0$ when $k\rightarrow\infty$.
We will show that the Theorem \ref{convergencia} (convergence) is still valid if we replace $q^2$ by $q$ in (\ref{exponencial}). Let $\{x^{k_j}\}_{j\in N}$ be a subsequence of $\{x^k\}_{k\in N}$ that satisfies $\displaystyle\lim_{j\rightarrow\infty}x^{k_j} =x^*$ e $z^*\in R^m_+$ such that $\displaystyle\lim_{j\rightarrow\infty}z^{k_j} =z^*$. 
It is easy to see that, if (\ref{caract. em x 1}) is true, the inequality (\ref{des Omega}) becomes
{\small
\begin{equation}
f(x,z^{k_j + 1}) \geq  f(x^{k_j + 1},z^{k_j + 1}) - \frac{\mu^{k}}{2}<\zeta^{k_j + 1},x - x^{k_j + 1}>, \ \forall x\in \Omega.
\label{des Omega 1}
\end{equation}}
From (P2) and (P4), $f$ is continuous in $R^n\times R^m_+$, so that 
$f(x^*,z^*) = \displaystyle\lim_{k\rightarrow\infty}f(x^{k_j},z^{k_j}) =  \displaystyle\inf_{k\in N}\{ f(x^k,z^k)\}$. 
Therefore from (iii)' $\displaystyle\lim_{k\rightarrow\infty}f(x^{k_j + 1},z^{k_j + 1}) = f(x^*,z^*)$.
As $\|\zeta^{k}\| \leq M$ (see Prop. \ref{Limitacao}), $\lim_{j\rightarrow\infty}\mu^{k} = 0$ and $\|x - x^{k +1}\|\leq M_1$,
using Cauchy-Schwarz inequality, we conclude that $\mid \frac{\mu^{k}}{2}<\zeta^{k + 1},x - x^{k + 1}> \mid \rightarrow 0$ when $k\rightarrow \infty$. 
Therefore, from (\ref{des Omega 1}), $f(x,z^*) \geq f(x^*,z^*), \ \forall x \in \Omega^{k}$, and then, similarly to the end of the demonstration of 
Theorem \ref{convergencia}, we conclude that $x^*\in {\rm argmin}_w \{ F(x) / x \in R^n \}$.

\section{Numerical examples}
\label{matlab}
In this section we will implement the $(LQDPS)$ method given in section \ref{metodo}. All numerical experiments were performed using an Intel(R) Core(TM) 2 Duo with Windows 7. The source code is written in Matlab. We tested our method taking into account three multi-objective test functions presented by Li and Zhang in \cite{Hui}, that is, we took into account the following functions: \\ 
\ \\ 
{\bf (a) (\cite{Hui}, function F1, pg. 287):} $F_a = (F_a^1,F_a^2): R^3 \rightarrow R^2$ given by $F_a^1 = x_1 + 2(x_3 - x_1^2)^2$, 
$F_a^2 = 1 - \sqrt{x_1} + 2(x_2 - x_1^{0,5})^2$ and $x_i \in [0,1], \ i = 1,\ 2 ,\ 3$ with the set of all Pareto optimal points (PS) given by
$x_2 = x_1^{0,5}$ and $x_3 = x_1^2$, $ x_1\in [0,1]$. \\
\ \\ 
{\bf (b) (\cite{Hui}, function F4, pg. 287):} $F_b = (F_b^1,F_b^2): R^3 \rightarrow R^2$ 
given by $F_b^1 = x_ 1 + 2 (x_3 - 0,8 x_1 {\rm cos}((6\pi x_1 + \pi)/3))^2$, $F_b^2 = 1 - \sqrt{x_ 1} + 2 ( x_2 - 0,8 x_1 {\rm sin} (6\pi x_1 + 2\pi /3))^2$ and 
$(x_1,x_2,x_3)\in [0,1]\times [-1,1]\times [-1,1]$ with the set (PS) given by $x_2 = 0,8 x_1 {\rm sin} (6\pi x_1 + 2\pi /3)$ and 
$x_3 = 0.8 x_1 {\rm cos} ((6\pi x_1 + \pi)/3)$, $ x_1\in [0,1]$. \\
\ \\
{\bf (c) (\cite{Hui}, function F6, pg. 287):} $F_c = (F_c^1, F_c^2, F_c^3): R^3 \rightarrow R^3$ given by: $F_c^1 = {\rm cos}(0,5 x_1 \pi){\rm cos}(0,5 x_2 \pi)$, 
$F_c^2 = {\rm cos}(0,5 x_1 \pi){\rm sin}(0,5 x_2 \pi)$, $F_c^3 = {\rm sin}(0,5 x_1 \pi) + 2(x_3 - 2 x_2 {\rm sin} (2\pi x_1 + \pi))^2$ 
and $(x_1,x_2,x_3)\in [0,1]\times [0,1] \times [-2,2]$ with the set (PS) given by $x_3 = 2 x_ 2{\rm sin} (2\pi x_1 + \pi)$, $(x_1,x_2)\in [0,1]\times [0,1].$ \\
\ \\
The tests will be performed using the scalarization function proposed in this study (see prop. \ref{exemplo p5}) and the scalarization function proposed by Greg\'orio and Oliveira in \cite{Gregorio} (see function given by (\ref{exemplo Gregorio e Oliveira})). All tests will consider the quasi-distance application $q: R^n \times R^n \rightarrow R_+$ presented by Moreno et al. in \cite{Moreno}; more specifically, we will consider $q: R^3 \times R^3 \rightarrow R_+$ given by $q(x,y) = \sum\limits_{i=1}^3 q_i(x_i,y_i)$ where $q_i(x_i,y_i)= 3(y_i-x_i)$ se $y_i - x_i > 0$ or $q_i(x_i,y_i) = 2(x_i-y_i)$ if  $y_i - x_i \leq 0$.\\
\ \\ 
In the tables, $tol$ denotes the stop criterion tolerance ($\|x^k - x^{k+1}\|_{\infty}\leq tol$); $\mu_k$, $\beta_k$ are the parameters of the $(LQDPS)$ method; $k_i^*, \ i = 1,2$ the number of iterations of the algorithm using the scalarization function $f_i: R^n\times R^m_+\rightarrow R, \ i = 1,2$ where $f_1$ is given by Proposition \ref{exemplo p5} and $f_2$ is given by (\ref{exemplo Gregorio e Oliveira}) and $\|x^*_{k_i^*} - x^*\|_\infty$ is the distance between the approximate solution from $f_i$ and the exact solution, i.e., the error committed with the scalarization function $f_i$. The maximum number of iterations is 100.   
\begin{exem}
In this example, we will consider the multi-objective function $F_a: R^3 \rightarrow R^2$ given above, and the initial iterations $x_0 = (0.5, 0.5, 0.5)$ \ $\in R^3$ and $z_0 = (1, 1)\in R^2_{++}.$ The numeric results are shown in the table below. \\
\ \\
{\small
\colorbox[gray]{0.9}{
\begin{tabular}{l l l l l l l l}
 \hline
No. &  $tol$      &  $\mu_k$    & $\beta_k$ & $k_1^*$ & $\|x^*_{k_1^*} - x^*\|_\infty$ &  $k_2^*$ & $\|x^*_{k_2^*} - x^*\|_\infty$ \\ \hline
1   &  $10^{-2}$  &  $1 + 1/k$  & $1 + 1/k$ &   9     &    5.545339e-003        &     10   &    5.118762e-002   \\
2   &  $10^{-3}$  &  $1 + 1/k$  & $1 + 1/k$ &   28    &    6.247045e-009        &     23   &    6.979995e-003  \\
3   &  $10^{-4}$  &  $1 + 1/k$  & $1 + 1/k$ &   87    &    7.960987e-009        &     62   &    8.279647e-009  \\ \hline
4   &  $10^{-2}$  &  $1 + 1/k$  &   $k$     &    7    &    1.701151e-002        &     9    &    5.726488e-002   \\              
5   &  $10^{-3}$  &  $1 + 1/k$  &   $k$     &   28    &    7.351215e-009        &     24   &    6.281176e-003   \\
6   &  $10^{-4}$  &  $1 + 1/k$  &   $k$     &   100   &    3.576296e-009        &     41   &    8.260435e-009    \\ \hline
7   &  $10^{-2}$  &  $2 - 1/k$  &   $1/k$   &    7    &    2.273775e-002        &     8    &    9.389888e-002   \\
8   &  $10^{-3}$  &  $2 - 1/k$  &   $1/k$   &    15   &    2.790977e-003        &     32   &    1.040779e-002   \\
9   &  $10^{-4}$  &  $2 - 1/k$  &   $1/k$   &    28   &    1.071720e-008        &     100  &    9.213105e-009   \\ \hline
10  &  $10^{-2}$  &  $2 - 1/k$  &   $k$     &    7    &    1.674806e-002        &      8   &    9.413130e-002   \\
11  &  $10^{-3}$  &  $2 - 1/k$  &   $k$     &    27   &    8.168611e-009        &     32   &    1.039107e-002   \\
12  &  $10^{-4}$  &  $2 - 1/k$  &   $k$     &    100  &    8.096220e-009        &     65   &    7.790086e-009   \\ \hline
13  &  $10^{-2}$  &    $1$      &   $1$     &    8    &    6.966285e-003        &     9    &    5.000950e-002   \\
14  &  $10^{-3}$  &    $1$      &   $1$     &    26   &    1.906054e-009        &     23   &    6.138829e-003   \\
15  &  $10^{-4}$  &    $1$      &   $1$     &    83   &    8.254353e-009        &     39   &    1.546241e-005  \\ \hline
\end{tabular}} }
\end{exem}

\begin{exem}
In this example we consider the multi-objective function $F_b: R^3 \rightarrow R^2$ given above, and the initial iterations $x_0 = (0.5, 0.5, 0.5)\in R^3$ and $z_0 = (1, 1)\in R^2_{++}.$ the numeric results are shown in the table below. \\
\ \\ 
{\small
\colorbox[gray]{0.9}{
\begin{tabular}{l l l l l l l l}
 \hline
No. &  $tol$      &  $\mu_k$    & $\beta_k$ & $k_1^*$ & $\|x^*_{k_1^*} - x^*\|$ &  $k_2^*$ & $\|x^*_{k_2^*} - x^*\|_\infty$ \\ \hline
1   &  $10^{-2}$  &  $1 + 1/k$  & $1 + 1/k$ &    10   &     4.419117e-003       &    10    &    3.800596e-002   \\
2   &  $10^{-3}$  &  $1 + 1/k$  & $1 + 1/k$ &    29   &     7.617346e-009       &    20    &    5.872760e-003   \\
3   &  $10^{-4}$  &  $1 + 1/k$  & $1 + 1/k$ &    92   &     7.831306e-009       &    100   &    8.102059e-009   \\ \hline
4   &  $10^{-2}$  &  $1 + 1/k$  &   $k$     &    7    &     1.423293e-002       &    10    &    3.771631e-002   \\              
5   &  $10^{-3}$  &  $1 + 1/k$  &   $k$     &    20   &     4.560126e-009       &    21    &    5.533943e-003   \\
6   &  $10^{-4}$  &  $1 + 1/k$  &   $k$     &    98   &     6.872232e-009       &    38    &    1.000619e-007   \\ \hline
7   &  $10^{-2}$  &  $2 - 1/k$  &   $1/k$   &    7    &     2.265857e-002       &    9     &    6.038495e-002   \\
8   &  $10^{-3}$  &  $2 - 1/k$  &   $1/k$   &    15   &     3.304754e-003       &    25    &    8.176106e-003   \\
9   &  $10^{-4}$  &  $2 - 1/k$  &   $1/k$   &    28   &     7.814512e-009       &    100   &    7.497307e-009   \\ \hline
10  &  $10^{-2}$  &  $2 - 1/k$  &   $k$     &    7    &     1.251182e-002       &    7     &    6.525365e-002   \\
11  &  $10^{-3}$  &  $2 - 1/k$  &   $k$     &    30   &     8.735791e-009       &    23    &    8.117802e-003   \\
12  &  $10^{-4}$  &  $2 - 1/k$  &   $k$     &    100  &     5.561728e-009       &    52    &    7.547563e-009   \\ \hline
13  &  $10^{-2}$  &    $1$      &   $1$     &    8    &     5.099261e-003       &    10    &    4.231940e-002   \\
14  &  $10^{-3}$  &    $1$      &   $1$     &    27   &     5.045036e-009       &    22    &    5.051759e-003   \\
15  &  $10^{-4}$  &    $1$      &   $1$     &    88   &     8.499673e-009       &    82    &    9.235203e-010                \\ \hline
\end{tabular}} }
\end{exem}
\begin{exem}
In this example we consider the multi-objective function $F_c: R^3 \rightarrow R^3$ given above, and the initial iterations $x_0 = (0.5, 0.5, 0.5)\in R^3$ and $z_0 = (1, 1, 1)\in R^3_{++}.$ The numeric results are shown in the table below.\\
\ \\ 
{\small
\colorbox[gray]{0.9}{
\begin{tabular}{l l l l l l l l}
 \hline
No. &  $tol$      &  $\mu_k$    & $\beta_k$ & $k_1^*$ & $\|x^*_{k_1^*} - x^*\|$ &  $k_2^*$ & $\|x^*_{k_2^*} - x^*\|_\infty$ \\ \hline
1   &  $10^{-2}$  &  $1 + 1/k$  & $1 + 1/k$ &   10    &     1.066481e-002       &    18    &   5.068830e-002     \\
2   &  $10^{-3}$  &  $1 + 1/k$  & $1 + 1/k$ &   31    &     1.698174e-008       &    33    &   5.315241e-003   \\
3   &  $10^{-4}$  &  $1 + 1/k$  & $1 + 1/k$ &   100   &     5.432795e-009       &    100   &   1.130028e-008    \\ \hline
4   &  $10^{-2}$  &  $1 + 1/k$  &   $k$     &   10    &     9.733382e-003       &    19    &   5.908485e-002    \\              
5   &  $10^{-3}$  &  $1 + 1/k$  &   $k$     &   28    &     4.176586e-010       &    34    &   2.307318e-007    \\
6   &  $10^{-4}$  &  $1 + 1/k$  &   $k$     &   100   &     7.086278e-011       &    35    &   7.450585e-009    \\ \hline
7   &  $10^{-2}$  &  $2 - 1/k$  &   $1/k$   &   11    &     2.653977e-002       &    20    &   9.899806e-002    \\
8   &  $10^{-3}$  &  $2 - 1/k$  &   $1/k$   &   18    &     2.046561e-007       &    47    &   1.059293e-002    \\
9   &  $10^{-4}$  &  $2 - 1/k$  &   $1/k$   &   33    &     1.161832e-008       &    100   &   9.253656e-009    \\ \hline
10  &  $10^{-2}$  &  $2 - 1/k$  &   $k$     &   11    &     1.835990e-002       &    22    &   8.862843e-002   \\
11  &  $10^{-3}$  &  $2 - 1/k$  &   $k$     &   28    &     5.441347e-010       &    48    &   1.047200e-002   \\
12  &  $10^{-4}$  &  $2 - 1/k$  &   $k$     &   100   &     9.476497e-010       &    75    &   2.793537e-009    \\ \hline
13  &  $10^{-2}$  &    $1$      &   $1$     &    9    &     8.326796e-003       &    17    &   5.182457e-002   \\
14  &  $10^{-3}$  &    $1$      &   $1$     &    29   &     1.343175e-008       &    32    &   4.799238e-003   \\
15  &  $10^{-4}$  &    $1$      &   $1$     &    96   &     6.882171e-009       &   100    &   3.961009e-009    \\ \hline
\end{tabular}} }
\end{exem}


\section{Conclusions}
\paragraph{}
Greg\'orio and Oliveira \cite{Gregorio} presented an example of a function that satisfies the properties (P1) to (P4). 
Based on Fliege and Svaiter \cite{Fliege}, Greg\'orio and Oliveira assumed that $\Omega^0$ is bounded, and they established 
the convergence of the logarithm-quadratic proximal scalarization method.

In this study, we propose adding a condition to one of the objective functions, which limits $\Omega^0$. We also 
propose another example of a function that satisfies the properties (P1) to (P4). As a variation of the 
logarithm-quadratic proximal scalarization method of Greg\'orio and Oliveira \cite{Gregorio}, we replaced the 
quadratic term with a quasi-distance, which entails losing important properties, such as convexity and differentiability. 
Proceeding differently, however, the convergence of the method was proved. Finally, some numerical examples of the $(LQDPS)$ method were presented.\\
\ \\
{\bf Acknowledgments.} The authors are grateful to Dr. Michael Sousa (UFC-Brazil) for its aid in the implementation of the (LQDPS) Method.


\end{document}